\newtheorem{thm}{Theorem}[section]
\newtheorem{lem}[thm]{Lemma}
\newtheorem{prop}[thm]{Proposition}
\theoremstyle{definition}
\newtheorem{dfn}[thm]{Definition}
\newtheorem{rem}[thm]{Remark}
\newtheorem{conv}[thm]{Convention}
\newtheorem{ex}[thm]{Example}
\theoremstyle{remark}
\newtheorem{claim}{Claim}
\newtheorem*{claim*}{Claim}
\newtheorem*{ac}{Acknowlegments}
\numberwithin{equation}{thm}
\def\A{\mathrm{A}}
\def\a{\mathfrak{a}}
\def\ann{\operatorname{Ann}}
\def\ass{\operatorname{Ass}}
\def\b{\mathfrak{b}}
\def\c{\mathfrak{c}}
\def\cm{\operatorname{CM}}
\def\D{\operatorname{D}}
\def\d{\operatorname{\mathsf{D}}}
\def\db{\operatorname{\mathsf{D^b}}}
\def\depth{\operatorname{depth}}
\def\F{\mathrm{F}}
\def\ge{\geqslant}
\def\gor{\operatorname{Gor}}
\def\H{\operatorname{H}}
\def\height{\operatorname{ht}}
\def\Hom{\operatorname{Hom}}
\def\id{\mathrm{id}}
\def\K{\operatorname{K}}
\def\le{\leqslant}
\def\ltensor{\otimes^\mathbf{L}}
\def\m{\mathfrak{m}}
\def\mcm{\operatorname{MCM}}
\def\Min{\operatorname{Min}}
\def\Mod{\operatorname{Mod}}
\def\mod{\operatorname{mod}}
\def\N{\mathbb{N}}
\def\p{\mathfrak{p}}
\def\q{\mathfrak{q}}
\def\rhom{\operatorname{\mathbf{R}Hom}}
\def\rg{\mathbf{R}\Gamma}
\def\spec{\operatorname{Spec}}
\def\supp{\operatorname{Supp}}
\def\T{\mathsf{T}}
\def\V{\operatorname{V}}
\def\X{\mathsf{X}}
\def\Y{\mathsf{Y}}
\def\ZZ{\mathbb{Z}}
\begin{document}
\allowdisplaybreaks
\title[Faltings' annihilator theorem and $t$-structures of derived categories]{Faltings' annihilator theorem\\
and $t$-structures of derived categories}
\author{Ryo Takahashi}
\address{Graduate School of Mathematics, Nagoya University, Furocho, Chikusaku, Nagoya 464-8602, Japan}
\email{takahashi@math.nagoya-u.ac.jp}
\urladdr{https://www.math.nagoya-u.ac.jp/~takahashi/}
\thanks{2020 {\em Mathematics Subject Classification.} 13D09, 13D45, 13F40}
\thanks{{\em Key words and phrases.} CM-excellent, derived category, Faltings' annihilator theorem, local cohomology, specialization-closed subset, sp-filtration, $t$-structure, weak Cousin condition}
\thanks{The author was partly supported by JSPS Grant-in-Aid for Scientific Research 19K03443}
\begin{abstract}
In this paper, we prove Faltings' annihilator theorem for complexes over a CM-excellent ring.
As an application, we give a complete classification of the $t$-structures of the bounded derived category of finitely generated modules over a CM-excellent ring of finite Krull dimension.
\end{abstract}
\maketitle
\section{Introduction}

Let $R$ be a commutative noetherian ring.
Following \v{C}esnavi\v{c}ius \cite{C}, we say that $R$ is {\em CM-excellent} if it satisfies the three conditions below, which have been studied deeply by Kawasaki \cite{K0,K,K2}.
\begin{itemize}
\item
The ring $R$ is universally catenary.
\item
The formal fibers of the localization of $R$ at each prime ideal are Cohen--Macaulay.
\item
The Cohen--Macaulay locus of each finitely generated $R$-algebra is Zariski-open.
\end{itemize}
Typical examples of a CM-excellent ring include an excellent ring, more generally an acceptable ring in the sense of Sharp \cite{S}, and a homomorphic image of a Cohen--Macaulay ring \cite{K2}.
In particular, the ring $R$ is CM-excellent if it possesses a dualizing complex, since the existence of a dualizing complex is equivalent to the condition that the ring is a homomorphic image of a Gorenstein ring of finite Krull dimension \cite{K0}.

Let $\db(R)$ stand for the bounded derived category of finitely generated $R$-modules.
The first main result of this paper is the following theorem, which is {\em Faltings' annihilator theorem} for complexes.

\begin{thm}[Theorem \ref{11}]\label{20}
Let $R$ be a CM-excellent ring.
Let $Y$ and $Z$ be specialization-closed subsets of $\spec R$, and let $n$ be an integer.
Then the following two conditions are equivalent for each $X\in\db(R)$.
\begin{enumerate}[\rm(1)]
\item
For all prime ideals $\p$ and $\q$ of $R$ with $Z\ni\p\supseteq\q\notin Y$, one has the inequality $\height\p/\q+\depth X_\q\ge n$.
\item
There exists an ideal $\b$ of $R$ such that $\V(\b)\subseteq Y$ and $\b\H_Z^{<n}(X)=0$.
\end{enumerate}
\end{thm}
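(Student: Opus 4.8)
The plan is to establish the two implications separately. Both are governed by the interplay between vanishing of the local cohomology $\rg_Z(X)$ and a pointwise depth condition, and both become tractable after localizing at a prime and completing, where a dualizing complex is available and local duality applies.

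For $(2)\Rightarrow(1)$, suppose $\b$ satisfies $\V(\b)\subseteq Y$ and $\b\,\H_Z^{<n}(X)=0$. Then $\supp_R\H_Z^i(X)\subseteq Y$ for $i<n$, so for any $\q\notin Y$ one has $\H^i_Z(X)_\q=\H^i_{Z_\q}(X_\q)=0$ for $i<n$, where $Z_\q$ is the trace of $Z$ on $\spec R_\q$. Fixing $\p\in Z$ with $\p\supseteq\q$, it remains to deduce $\height\p/\q+\depth X_\q\ge n$. I would do so by completing $R_\q$, choosing a minimal prime over which the relevant heights and depths agree with those over $R$ --- here universal catenarity and the Cohen--Macaulay formal fibres of $R$ enter --- and then using local duality to turn the vanishing of $\H_{Z_\q}^{<n}$ into the required inequality via a dimension count along a saturated chain from $\q$ to $\p$.

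For $(1)\Rightarrow(2)$, the substance of the theorem, I would first reduce to producing, for each $i<n$, a single ideal $\mathfrak{c}$ with $\V(\mathfrak{c})\subseteq Y$ and $\mathfrak{c}\,\H_Z^i(X)=0$; since $X\in\db(R)$ has only finitely many nonzero $\H_Z^i(X)$ with $i<n$, a product of these ideals then serves as $\b$. To produce $\mathfrak{c}$, the crucial point is the pointwise vanishing $\H^{<n}_{Z_\q}(X_\q)=0$ for every $\q\notin Y$. After localizing at such a $\q$ and completing, a dualizing complex $D$ is at hand and local duality converts this into a vanishing statement for the cohomology of $\rhom_R(X,D)$ along the closed sets comprising $Z_\q$; I would establish it by Noetherian induction on $\dim\supp X$ together with induction on $n$, cutting $X$ down by a parameter and using the weak Cousin condition --- which a CM-excellent ring satisfies, by Kawasaki's work --- to control how the height inequalities in $(1)$ change after the cut, the base cases $n\le 0$ being trivial. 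Promoting this pointwise vanishing to an honest annihilator $\mathfrak{c}$ with $\V(\mathfrak{c})\subseteq Y$ uses the remaining ingredient of CM-excellence, the openness of the Cohen--Macaulay loci of finitely generated $R$-algebras, which forces the relevant exceptional loci to be closed.

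The main obstacle is the inductive core just described: converting the purely numerical hypothesis $(1)$ into genuine vanishing of local cohomology. Bringing local duality to bear forces the passage to completions, so one first needs CM-excellence to ensure heights and depths transport faithfully there; and the double induction must be arranged so that cutting $X$ by an element preserves both the height estimates of $(1)$ and the specialization-closed subsets in a form to which the inductive hypothesis applies --- which is exactly the role of the weak Cousin condition. I expect that the cleanest way to organize all of this, and to pass from the pointwise statements to the global annihilator, is in the language of sp-filtrations of $\spec R$ and the associated $t$-structures, which is also what renders the theorem directly applicable to the classification announced in the abstract.
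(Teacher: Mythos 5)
Your plan for the hard direction $(1)\Rightarrow(2)$ hinges on an intermediate statement that does not do the work you want it to: you propose to first establish, for every $\q\notin Y$, the pointwise vanishing $\H^{<n}_{Z_\q}(X_\q)=0$, and then ``promote'' this to an ideal $\b$ with $\V(\b)\subseteq Y$. There are two problems. First, even if that pointwise vanishing were in hand, it would say only that $\supp\H^i_Z(X)\subseteq Y$ for $i<n$, which is strictly weaker than the conclusion: the modules $\H^i_Z(X)$ need not be finitely generated, and support containment does not give an annihilator ideal with $\V(\b)\subseteq Y$ (compare $E(R/\p)$, which has support $\{\p\}$ but zero annihilator). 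The promotion is the theorem, not a formality to be discharged by openness of loci. Second, the pointwise vanishing itself is not an easy consequence of (1): condition (1) is a depth bound at $\q$, and depth is governed by $\H^\bullet_{\m_\q}$, not by $\H^\bullet_{Z_\q}$ when $Z_\q$ is larger than $\{\m_\q\}$ or when $\q\notin Z$; extracting vanishing of $\H^{<n}_{Z_\q}$ is essentially as hard as what you are trying to prove. Your proposed route through completion and local duality is also at odds with the point of the theorem --- as the introduction emphasizes, CM-excellence is weaker than admitting a dualizing complex, and the whole innovation is to avoid local duality; passing to $\widehat{R_\q}$ leaves you with the nontrivial task of transporting heights, depths, the specialization-closed subsets $Y,Z$, and above all the sought ideal $\b\subseteq R$, back down, which you only gesture at.

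There is also a terminological confusion: the \emph{weak Cousin condition} in this paper is a property of sp-filtrations of $\spec R$, not of rings, so ``the weak Cousin condition --- which a CM-excellent ring satisfies'' does not parse; moreover it plays no role in the proof of the present theorem (it only enters in Sections 4--5 for $t$-structures). Likewise, organizing the proof ``in the language of sp-filtrations and $t$-structures'' reverses the logic of the paper, where the $t$-structure classification is an \emph{application} of Theorem~\ref{11}, not an ingredient.

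For contrast, the paper's proof of $(1)\Rightarrow(2)$ reduces to the known module case rather than redoing local duality: it runs a noetherian induction on $\supp X$, uses a Koszul complex $X'=X\ltensor_R\K(y_1,\dots,y_s)$ built from generators of $J$ with $J/I=\Gamma_{Y/I}(R/I)$ to arrange $\H(X')\in\mod R/J$, exploits CM-excellence via Kimura's openness of the maximal Cohen--Macaulay locus to pick an element $a$ avoiding $\Min(R/J)$ that controls the depths of the cohomology modules, truncates $X'$ into its cohomologies, applies Kawasaki's annihilator theorem for \emph{modules} to each $\H^{s_i}(X')$, and finally trades the unwanted closed set $\V(a)$ for a strict drop in support (cutting by a prime-avoided $b\in\b$) so that the induction hypothesis closes the loop. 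Your $(2)\Rightarrow(1)$ direction is also handled more elementarily in the paper: an injective-resolution argument (Lemma~\ref{15}) for $\q\in Z$, and an appeal to \cite[Theorem 4.5]{DZ} with $Y'=Y\cup Z$ for $\q\notin Z$, with no completion needed.
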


\noindent
If we restrict Theorem \ref{20} to the case where the complex $X$ is a module, then it is the same as the main result of \cite{K}, which extends a lot of previous results with additional assumptions, including Faltings' original one \cite{F}; see \cite{K} for more details.
The main result of \cite{DZ} shows the assertion of Theorem \ref{20} under the stronger assumptions that $Y$ contains $Z$ and that $R$ possesses a dualizing complex.
The latter assumption is to use the local duality theorem; it does play an essential role in the proof of the result of \cite{DZ}.

As an application of Theorem \ref{20}, we obtain the second main result of this paper: the following theorem provides a complete classification of the {\em $t$-structures} (in the sense of Be\u{\i}linson, Bernstein and Deligne \cite{BBD}) of the triangulated category $\db(R)$ in terms of certain filtrations by specialization-closed subsets of $\spec R$.

\begin{thm}[Theorem \ref{12}]\label{21}
Let $R$ be a CM-excellent ring with finite Krull dimension.
Then the aisles in $\db(R)$ bijectively correspond to the sp-filtrations of $\spec R$ satisfying the weak Cousin condition.
\end{thm}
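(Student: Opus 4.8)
The plan is to construct two mutually inverse maps between the aisles in $\db(R)$ and the sp-filtrations of $\spec R$ satisfying the weak Cousin condition. To a weak-Cousin sp-filtration $\phi=(\phi^n)_{n\in\ZZ}$ I attach
\[
\mathcal{U}_\phi=\{\,X\in\db(R)\ :\ \supp_R\H^n(X)\subseteq\phi^n\ \text{ for all }n\in\ZZ\,\},
\]
and to an aisle $\mathcal{U}$ I attach $\phi_{\mathcal{U}}=(\phi_{\mathcal{U}}^n)_{n\in\ZZ}$ with $\phi_{\mathcal{U}}^n=\bigcup_{X\in\mathcal{U}}\supp_R\H^n(X)$. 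First I would check that $\phi_{\mathcal{U}}$ is automatically an sp-filtration: each $\supp_R\H^n(X)$ is closed because $\H^n(X)$ is finitely generated, an arbitrary union of specialization-closed subsets is specialization-closed, and $\phi_{\mathcal{U}}^{\,n+1}\subseteq\phi_{\mathcal{U}}^{\,n}$ since $\mathcal{U}$ is closed under the shift $[1]$. The argument then rests on three claims: (A) $\mathcal{U}_\phi$ is an aisle of $\db(R)$ if and only if $\phi$ satisfies the weak Cousin condition; (B) $\mathcal{U}=\mathcal{U}_{\phi_{\mathcal{U}}}$ for every aisle $\mathcal{U}$; and (C) $\phi=\phi_{\mathcal{U}_\phi}$ for every weak-Cousin sp-filtration $\phi$.

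For (A), closure of $\mathcal{U}_\phi$ under positive shifts, extensions and direct summands is inherited from the unbounded derived category $\d(\Mod R)$, in which it is known that $\{X:\supp_R\H^n(X)\subseteq\phi^n\}$ is a compactly generated aisle; its left-truncation functor comes from an explicit construction iterating the derived section functors $\rg_{\phi^i}$ along the canonical maps $\rg_{\phi^{\,i+1}}\to\rg_{\phi^{\,i}}$. Because $\dim R<\infty$, this construction is a finite procedure, each step of which replaces a single cohomology module of the complex being truncated by a subquotient of a local cohomology module $\H^j_{\phi^i}(-)$ of it. The heart of the theorem is that, started from $X\in\db(R)$, the output again has bounded, finitely generated cohomology exactly when $\phi$ satisfies the weak Cousin condition — and this is where Theorem \ref{20} is used: feeding the specialization-closed sets provided by $\phi$ into Theorem \ref{20} converts the inequalities $\height\p/\q\ge1$ of the weak Cousin condition into the statement that some ideal whose vanishing locus lies in a smaller term of $\phi$ annihilates the relevant low-degree local cohomology, which is precisely what keeps the iterated truncation inside $\db(R)$; conversely, if the weak Cousin condition fails, the same analysis produces $X\in\db(R)$ whose truncation leaves $\db(R)$. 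I expect (A) to be the main obstacle; this is the reason Theorem \ref{20} is established first, and it takes over the role played by the local duality theorem in \cite{DZ} without the need for a dualizing complex.

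For (B), the inclusion $\mathcal{U}\subseteq\mathcal{U}_{\phi_{\mathcal{U}}}$ is the definition of $\phi_{\mathcal{U}}$. For the reverse inclusion I would take $X\in\mathcal{U}_{\phi_{\mathcal{U}}}$ together with the truncation triangle $U\to X\to W\to U[1]$ of the aisle $\mathcal{U}$, where $U\in\mathcal{U}$ and $W\in\mathcal{U}^\perp$, both in $\db(R)$; since $\mathcal{U}_{\phi_{\mathcal{U}}}$ is closed under shifts and extensions and contains $U$ and $X$, it also contains $W$. If $W\ne0$, set $n_0=\inf\{\,n:\H^n(W)\ne0\,\}$ and pick $\p\in\ass_R\H^{n_0}(W)$, so that $\p\in\supp_R\H^{n_0}(W)\subseteq\phi_{\mathcal{U}}^{\,n_0}$ while $\Hom_{\db(R)}(R/\p[-n_0],W)=\Hom_R(R/\p,\H^{n_0}(W))\ne0$. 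The remaining point is the reconstruction statement that $\p\in\phi_{\mathcal{U}}^{\,n}$ forces $R/\p[-n]\in\mathcal{U}$: beginning from some $Y\in\mathcal{U}$ with $\p\in\supp_R\H^n(Y)$, I would apply aisle-preserving operations — tensoring with Koszul complexes on a generating set of $\p$ realizes iterated cones of multiplication maps and so stays in $\mathcal{U}$ — and use the octahedral axiom together with $\dim R<\infty$ to concentrate the cohomology into degree $n$ with support $\V(\p)$ and split off $R/\p[-n]$, which then lies in $\mathcal{U}$ because $\mathcal{U}$ is closed under direct summands. This contradicts the nonvanishing of the above Hom, so $W=0$ and $X\cong U\in\mathcal{U}$.

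For (C), $\phi_{\mathcal{U}_\phi}^{\,n}\subseteq\phi^n$ holds by the definition of $\mathcal{U}_\phi$, and $R/\p[-n]\in\mathcal{U}_\phi$ for every $\p\in\phi^n$ — its only nonzero cohomology being $R/\p$ in degree $n$ with support $\V(\p)\subseteq\phi^n$ — gives the opposite containment, so $\p\in\phi_{\mathcal{U}_\phi}^{\,n}$. Assembling the pieces: by (A) the rule $\phi\mapsto\mathcal{U}_\phi$ takes values in aisles, by (B) and the ``only if'' half of (A) the rule $\mathcal{U}\mapsto\phi_{\mathcal{U}}$ takes values in weak-Cousin sp-filtrations, and (B) together with (C) shows the two rules are mutually inverse. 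This establishes the stated bijection.
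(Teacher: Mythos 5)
Your overall architecture is close to the paper's: the map $\phi\mapsto\mathcal{U}_\phi$ is exactly the paper's $\A$, and claim (A) correctly isolates the role of Theorem~\ref{20}, which is precisely what Lemma~\ref{19} does (the paper itself then defers the remaining structural work to the proof of \cite[Theorem 6.9]{AJS}). However, there is a genuine gap in your step (B).

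The gap is the ``reconstruction'' claim that $\p\in\phi_{\mathcal U}^{\,n}$ forces $R/\p[-n]\in\mathcal U$, and specifically your proposed mechanism for it. Starting from $Y\in\mathcal U$ with $\p\in\supp\H^n(Y)$ and tensoring with a Koszul complex $\K(\underline{p})$ on generators of $\p$ does stay in $\mathcal U$ and does push all cohomology into $\V(\p)$, but it does not concentrate cohomology in a single degree --- quite the opposite, Koszul tensoring smears cohomology across a range of degrees of length roughly the number of generators. The octahedral axiom lets you reorganize cones; it gives you no tool for chopping off unwanted cohomology while remaining in $\mathcal U$, because aisles are not closed under taking cohomology or under canonical (hard or soft) truncation in either direction. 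And even if you managed to land on a complex whose only cohomology sits in degree $n$ with support $\V(\p)$, there is no reason $R/\p$ should appear as a direct summand of that module: a finitely generated module supported on $\V(\p)$ need not have $R/\p$ as a summand. So the sentence ``concentrate the cohomology into degree $n$ with support $\V(\p)$ and split off $R/\p[-n]$'' is not an argument.

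The reconstruction statement is true, but its proof goes through the $\d(R)$-level classification you already invoked in (A): by \cite[Theorem 3.11]{AJS} the compactly generated aisle of $\d(R)$ generated by $\mathcal U$ equals $\{X\in\d(R)\mid\supp\H^i(X)\subseteq\psi(i)\text{ for all }i\}$ for some sp-filtration $\psi$, with $\psi(i)=\{\p\mid R/\p[-i]\in\text{that aisle}\}$; one then checks that this aisle meets $\db(R)$ in exactly $\mathcal U$, so $Y\in\mathcal U$ with $\p\in\supp\H^n(Y)$ gives $\p\in\psi(n)$ and hence $R/\p[-n]\in\mathcal U$. In effect, the paper side-steps the issue by defining the inverse map as $\Psi(\X)(i)=\{\p\mid R/\p[-i]\in\X\}$ rather than as your union of supports; that the two descriptions coincide is equivalent to the reconstruction result, so your reformulation does not save any work --- it just relocates where the $\d(R)$-level input is needed. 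A secondary, smaller concern: your ``conversely'' sentence at the end of (A) (necessity of the weak Cousin condition) is not ``the same analysis''; that direction is handled separately in \cite[Lemma 5.7]{AJS} and does not use Theorem~\ref{20}.
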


\noindent
The mutually inverse bijections giving the one-to-one correspondence in this theorem can be described explicitly; see Theorem \ref{12}.
The main result of \cite{AJS} shows the assertion of Theorem \ref{21} under the stronger assumption that the ring $R$ admits a dualizing complex.
This assumption is, again, to apply the local duality theorem, and in fact, local duality plays a key role in the proof of the result of \cite{AJS}.

The organization of the present paper is as follows.
Section 2 is devoted to stating several preliminary definitions and results, including some observations on CM-excellence.
In Section 3, we shall prove Faltings' annihilator theorem for complexes, that is to say, we shall give a proof of Theorem \ref{20} stated above.
The proof is done by reducing to the case of modules and invoking Kawasaki's result \cite{K}, but the reduction step turns out to be quite complicated and subtle, requiring various techniques.
In Section 4, we recall the definition of an sp-filtration of $\spec R$ satisfying the weak Cousin condition, and interpret it in terms of a certain function on $\spec R$ which we call a $t$-function.
In the final Section 5, we shall apply Theorem \ref{20} to obtain Theorem \ref{21}, where a classification of $t$-structures by the $t$-functions on $\spec R$ is provided as well.

\section{Preliminaries}

This section consists of basic definitions, fundamental properties, simple observations that are used in later sections.
We begin with our convention adopted throughout the present paper.

\begin{conv}
We assume that all rings are commutative and noetherian, and all complexes are cochain ones.
Let $R$ be a (commutative noetherian) ring.
We denote by $\Mod R$ the category of $R$-modules, by $\d(R)$ the derived category of $\Mod R$, by $\mod R$ the category of finitely generated $R$-modules, and by $\db(R)$ the bounded derived category of $\mod R$.
The set of nonnegative integers is denoted by $\N$.
\end{conv}

We need the following standard numerical invariants for objects of $\d(R)$.

\begin{dfn}
The {\em supremum} and the {\em infimum} of $X\in\d(R)$ are defined respectively by $\sup X=\sup\{i\in\ZZ\mid\H^i(X)\ne0\}$ and $\inf X=\inf\{i\in\ZZ\mid\H^i(X)\ne0\}$.
Note that $\sup X$ and $\inf X$ are elements of $\ZZ\cup\{\pm\infty\}$.
\end{dfn}

Taking (soft) truncations gives rise to a certain series of exact triangles in $\db(R)$.

\begin{rem}\label{4}
Let $X$ be a nonzero object of $\db(R)$.
Then there exists a series
$$
\{\,X_{i+1}\to X_i\to\H^{s_i}(X_i)[-s_i]\rightsquigarrow\,\}_{i=0}^n
$$
of exact triangles in $\db(R)$ such that $n\in\N$, $X_0=X$, $X_{n+1}=0$, $s_i=\sup X_i\in\ZZ$, $\H^j(X_i)=\H^j(X)$ for all $j\le s_i$, and $\sup X=s_0>\cdots>s_n=\inf X$.
\end{rem}

Next we recall the definitions of a specialization-closed subset and a local cohomology functor.

\begin{dfn}
\begin{enumerate}[(1)]
\item
A subset $W$ of $\spec R$ is called {\em specialization-closed} if $\V(\p)\subseteq W$ for all $\p\in W$.
This is equivalent to saying that $W$ is a union of closed subsets of $\spec R$ (in the Zariski topology).
\item
Let $W$ be a specialization-closed subset of $\spec R$.
For an $R$-module $M$ we denote by $\Gamma_W(M)$ the set of elements $x\in M$ such that $\supp(Rx)\subseteq W$.
Then $\Gamma_W(M)$ is a submodule of $M$, and we get a left-exact additive covariant functor $\Gamma_W:\Mod R\to\Mod R$, which is called the {\em $W$-torsion functor}.
Using $K$-injective resolutions, one gets the right derived functor $\rg_W:\d(R)\to\d(R)$ of $\Gamma_W$.
For each $i$ set $\H_W^i=\H^i\rg_W:\d(R)\to\Mod R$ and call it the {\em $i$th local cohomology} functor with respect to the specialization-closed subset $W$.
When $I$ is an ideal of $R$ and $W=\V(I)$, the functor $\H_W^i$ coincides with the usual $i$th local cohomology functor $\H_I^i$ with respect to the ideal $I$.
\end{enumerate}
\end{dfn}

We recall the definition of the depth of an object of $\db(R)$ for a local ring $R$.

\begin{dfn}
Let $(R,\m,k)$ be a local ring.
Let $X\in\db(R)$.
Then one has the equality $\inf\rhom_R(k,X)=\inf\rg_\m(X)$; see \cite[Theorem I]{FI}.
This value is called the {\em depth} of $X$ and denoted by $\depth X$.
\end{dfn}

The following statements are straightforward from the definition of a depth.

\begin{rem}\label{1}
Let $R$ be a local ring.
Let $W$ be a specialization-closed subset of $\spec R$.
\begin{enumerate}[(1)]
\item
For each $X\in\db(R)$ and $n\in\ZZ$ one has $\depth X[n]=\depth X-n$.
\item
Let $X\to Y\to Z\to X[1]$ be an exact triangle in $\db(R)$.
Then $\depth X\ge\inf\{\depth Y,\,\depth Z+1\}$, $\depth Y\ge\inf\{\depth X,\,\depth Z\}$ and $\depth Z\ge\inf\{\depth Y,\,\depth X-1\}$.
\end{enumerate}
\end{rem}

Now we recall the definition of a Cohen--Macaulay locus, and introduce and explain CM-excellence.

\begin{dfn}
\begin{enumerate}[(1)]
\item
We denote by $\cm(R)$ the {\em Cohen--Macaulay locus} of $R$, that is, the set of prime ideals $\p$ of $R$ such that the local ring $R_\p$ is Cohen--Macaulay.
\item
Following \v{C}esnavi\v{c}ius \cite{C}, we say that $R$ is {\em CM-excellent} if $R$ satisfies {\em Kawasaki's three conditions}:
\begin{enumerate}[\quad(C1)]
\item
The ring $R$ is universally catenary.
\item
The formal fibers of $R_\p$ are Cohen--Macaulay for every $\p\in\spec R$.
\item
For each finitely generated $R$-algebra $S$ the subset $\cm(S)$ of $\spec S$ is open.
\end{enumerate}
\end{enumerate}
\end{dfn}

\begin{rem}\label{22}
\begin{enumerate}[(1)]
\item
The ring $R$ is CM-excellent if and only if the scheme $\spec R$ is CM-excellent in the sense of \cite[Definition 1.2]{C}.
In fact, the ``only if'' part holds since the condition \cite[Definition 1.2(2)]{C} for $\spec R$ means that $\cm(R/\p)$ is an open subset of $\spec R/\p$ for all $\p\in\spec R$.
To show the ``if'' part, let $S$ be a finitely generated $R$-algebra.
Suppose that $\spec R$ is CM-excellent.
Then $\spec S$ is CM-excellent as well, by \cite[Remark 1.5]{C}.
Hence for any $\q\in\spec S$ the subset $\cm(S/\q)$ of $\spec S/\q$ is open, and nonempty since it contains the zero ideal.
It follows by \cite[Theorem 24.5]{M} that $\cm(S)$ is an open subset of $\spec S$.
\item
If $R$ is acceptable in the sense of Sharp \cite{S}, then it is CM-excellent.
Indeed, let $S$ be a finitely generated $R$-algebra.
Then for each $\q\in\spec S$ the Gorenstein locus $\gor(S/\q)$ is open and nonempty (as it contains the zero ideal).
Since $\cm(S/\q)$ contains $\gor(S/\q)$, it follows by \cite[Theorem 24.5]{M} that $\cm(S)$ is open.
\item
If $R$ is excellent, then it is acceptable by \cite[Corollary 1.5]{GM}, and hence it is CM-excellent by (2).
\item
If $R$ is a homomorphic image of a Cohen--Macaulay ring, it is CM-excellent by \cite[Theorem 1.3]{K2}.
Hence, $R$ is a CM-excellent ring of finite Krull dimension if it has a dualizing complex by \cite[Corollary 1.4]{K0}.
\item
When $R$ has a Cohen--Macaulay module with full support, it is CM-excellent by \cite[Remark 1.4]{C} and (1).
\end{enumerate}
\end{rem}

\section{Faltings' annihilator theorem for complexes}

The purpose of this section is to prove Faltings' annihilator theorem for complexes over a CM-excellent ring, which is Theorem \ref{11}.
All the other things (except Remark \ref{13}) stated in the section are to achieve this purpose.
As is seen below, to show the theorem we use a reduction to the case of (shifts of) modules, which is rather complicated and subtle, and requires several techniques of not only local cohomology and depths of complexes, but also truncation and annihilation of complexes, Koszul complexes, torsion submodules with respect to specialization-closed subsets, Zariski-openness of loci, associated prime ideals and prime avoidance.

We begin with introducing some notation about specialization-closed subsets.

\begin{dfn}
Let $W$ be a specialization-closed subset of $\spec R$.
\begin{enumerate}[(1)]
\item
Let $P$ be a prime ideal of $R$.
We denote by $W_P$ the set of prime ideals of $R_P$ having the form $\p R_P$ with $P\supseteq\p\in W$.
This is a specialization-closed subset of $\spec R_P$.
Note that $W_P\ne\emptyset$ if and only if $P\in W$.
\item
Let $I$ be a proper ideal of $R$.
We denote by $W/I$ the set of prime ideals of $R/I$ having the form $\p/I$ with $I\subseteq\p\in W$.
This is a specialization-closed subset of $\spec R/I$. 
\end{enumerate}
\end{dfn}

The lemma below is used to show one of the implications of the equivalence given in our theorem.

\begin{lem}\label{15}
Let $W$ be a specialization-closed subset of $\spec R$.
Let $n\in\ZZ$ and $X\in\db(R)$.
\begin{enumerate}[\rm(1)]
\item
Suppose that $(R,\m)$ is local and $W$ is nonempty.
If $\H_W^i(X)=0$ for all integers $i<n$, then $\depth X\ge n$.
\item
Let $\p$ be a prime ideal of $R$.
If $\p$ belongs to $W$ and $\H_W^i(X)_\p=0$ for all integers $i<n$, then $\depth X_\p\ge n$.
\end{enumerate}
\end{lem}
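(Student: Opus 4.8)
The plan is to derive statement~(2) from statement~(1) by localizing, and to prove~(1) by using that the unique maximal ideal of $R$ automatically belongs to $W$. For the reduction: since $R$ is noetherian the $W$-torsion functor is the filtered union $\Gamma_W=\varinjlim_{\V(\a)\subseteq W}\Gamma_{\V(\a)}$ of ordinary torsion functors, each of which commutes with localization; consequently $\rg_W$ commutes with localization, giving a natural isomorphism $\rg_W(X)_\p\cong\rg_{W_\p}(X_\p)$ in $\d(R_\p)$ and hence $\H^i_W(X)_\p\cong\H^i_{W_\p}(X_\p)$ for all $i$. Under the hypothesis of~(2), these vanish for $i<n$; moreover $W_\p\ne\emptyset$ exactly because $\p\in W$, and $X_\p\in\db(R_\p)$ with $\depth X_\p=\depth_{R_\p}X_\p$. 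Thus part~(1), applied over the local ring $(R_\p,\p R_\p)$ to the complex $X_\p$, the integer $n$, and the nonempty specialization-closed set $W_\p$, gives $\depth X_\p\ge n$, which is~(2).

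To prove~(1): since $W$ is nonempty and specialization-closed it contains some prime, hence it contains $\m$. The role of $\m\in W$ is the functorial identity $\Gamma_\m=\Gamma_\m\circ\Gamma_W$ on $\Mod R$, valid because $\Gamma_\m(M)\subseteq\Gamma_W(M)$ (any $x$ with $\supp(Rx)\subseteq\{\m\}$ has $\supp(Rx)\subseteq W$), so $\Gamma_\m(\Gamma_W(M))=\Gamma_\m(M)\cap\Gamma_W(M)=\Gamma_\m(M)$. Next, $\Gamma_W$ takes injective $R$-modules to injective $R$-modules: writing a Matlis decomposition $E=\bigoplus_\q E(R/\q)^{(\mu_\q)}$ of an injective module $E$, one has $\Gamma_W(E)=\bigoplus_{\q\in W}E(R/\q)^{(\mu_\q)}$, since $\Gamma_W(E(R/\q))=E(R/\q)$ when $\q\in W$ and $=0$ when $\q\notin W$ (every nonzero $x\in E(R/\q)$ satisfies $\supp(Rx)=\V(\q)$). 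In particular $\Gamma_W$ sends injectives to $\Gamma_\m$-acyclics, so the composition of right derived functors yields a natural isomorphism $\rg_\m\cong\rg_\m\circ\rg_W$ on $\d(R)$.

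To conclude: the hypothesis $\H^i_W(X)=0$ for $i<n$ says that $\rg_W(X)$ is isomorphic in $\d(R)$ to its truncation, a complex concentrated in cohomological degrees $\ge n$; and $\rg_\m$, being the right derived functor of the left-exact functor $\Gamma_\m$, sends such a complex to one whose cohomology again lies in degrees $\ge n$ (resolve it by a $K$-injective complex of injective modules placed in degrees $\ge n$). Therefore $\inf\rg_\m(X)=\inf\rg_\m(\rg_W(X))\ge n$, and since $\depth X=\inf\rg_\m(X)$ we obtain $\depth X\ge n$. The argument is short; the points needing care are the familiar but fiddly facts about unbounded complexes in $\d(R)$ — namely the isomorphisms $\rg_\m\cong\rg_\m\circ\rg_W$ and $\rg_W(-)_\p\cong\rg_{W_\p}((-)_\p)$, which must be set up via $K$-injective (not naive) resolutions — and I expect the genuinely delicate bookkeeping to be the cofinality check that $\{\a\mid\V(\a)\subseteq W\}$ maps onto $\{\b\mid\V(\b)\subseteq W_\p\}$ under extension of ideals, handled by replacing an ideal of $R_\p$ with the contraction of the intersection of its minimal primes.
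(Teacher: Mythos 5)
Your proof is correct and follows essentially the same approach as the paper. For (1) the paper argues directly with an injective resolution and split exactness, whereas you package the same two ingredients (that $\Gamma_W$ preserves injectives, and that $\Gamma_\m\circ\Gamma_W=\Gamma_\m$ once $\m\in W$) into the derived-functor identity $\rg_\m\cong\rg_\m\circ\rg_W$; for (2) you reprove the localization isomorphism $\H^i_W(X)_\p\cong\H^i_{W_\p}(X_\p)$ that the paper simply cites from the literature (\cite[Corollary 3.5(1)]{DZ}).
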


\begin{proof}
(1) As $R$ is local and $W$ is nonempty, $\m$ is in $W$.
We may assume $X\ncong0$ in $\db(R)$.
Put $t=\inf X\in\ZZ$.
Take an injective resolution $E=(0\to E^t\to E^{t+1}\to\cdots)$ of $X$.
As $0=\H_W^i(X)=\H^i(\Gamma_W(E))$ for all $i<n$, the induced sequence $0\to\Gamma_W(E^t)\to\cdots\to\Gamma_W(E^n)$ is exact.
All the terms are injective modules, so the sequence is split exact.
Hence the induced sequence $0\to\Gamma_\m(\Gamma_W(E^t))\to\cdots\to\Gamma_\m(\Gamma_W(E^n))$ is (split) exact as well, which is isomorphic to the sequence $0\to\Gamma_\m(E^t)\to\cdots\to\Gamma_\m(E^n)$ as $\m\in W$.
Thus $\H_\m^{<n}(X)=0$.

(2) We have $0=\H_W^i(X)_\p=\H_{W_\p}^i(X_\p)$ for all $i<n$; the last equality follows by \cite[Corollary 3.5(1)]{DZ}. 
Since $\p$ belongs to $W$, the set $W_\p$ is nonempty.
Using (1), we obtain the desired inequality $\depth X_\p\ge n$.
\end{proof}

The next lemma plays an important role in the proof of the other implication of the equivalence in the theorem.
Thanks to this result, given a specialization-closed subset $Y$ and a complex $X$, we can replace $X$ with another complex $X'$ whose support is deeply involved with $Y$.
We denote by $\K(-)$ the Koszul complex.

\begin{lem}\label{3}
Let $X$ be an object of $\db(R)$.
The following statements hold.
\begin{enumerate}[\rm(1)]
\item
There is an ideal $I=(x_1,\dots,x_r)$ of $R$ such that $\supp X=\V(I)$ and $(X\xrightarrow{x_i}X)=0$ in $\db(R)$ for all $i$.
\item
Let $y_1,\dots,y_s$ be a sequence of elements of $R$, and put $X'=X\ltensor_R\K(y_1,\dots,y_s)$.
\begin{enumerate}[\rm(a)]
\item
Let $\p\supseteq\q$ be prime ideals of $R$.
Let $n\in\ZZ$.
If $\height\p/\q+\depth X_\q\ge n$, then $\height\p/\q+\depth X'_\q\ge n-s$.
\item
Let $Y,Z$ be a specialization-closed subset of $\spec R$.
Let $I$ be an ideal as in (1).
Suppose that the ideal $J=(y_1,\dots,y_s)$ contains $I$ and satisfies $\Gamma_{Y/I}(R/I)=J/I$.
If there is an ideal $\c$ of $R$ with $\V(\c)\subseteq Y$ and $\c\H_Z^{<(n-s)}(X')=0$, then there is an ideal $\b$ of $R$ with $\V(\b)\subseteq Y$ and $\b\H_Z^{<n}(X)=0$.
\end{enumerate}
\end{enumerate}
\end{lem}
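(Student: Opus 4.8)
The plan is to handle the two parts separately, with part (1) being a direct application of the "koszulization" trick and part (2)(a) and (2)(b) following by careful bookkeeping with the known behavior of depth under tensoring with a Koszul complex and of local cohomology under the relevant change of rings.

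For part (1), first recall that since $X\in\db(R)$, the support $\supp X$ is a closed subset $\V(I_0)$ of $\spec R$ for some ideal $I_0$. Write $I_0=(z_1,\dots,z_m)$. For each $j$, the map $(X\xrightarrow{z_j^{N}}X)$ kills $\H^i(X)$ for $N$ large whenever $z_j$ lies in every associated prime of every $\H^i(X)$ that it annihilates; more precisely, since $\supp\H^i(X)\subseteq\V(I_0)$ and $\H^i(X)$ is finitely generated with only finitely many nonzero $i$, there is a single integer $N$ with $I_0^N\H^i(X)=0$ for all $i$, so that $(X\xrightarrow{z_j^N}X)=0$ in $\db(R)$ by a standard argument using the truncation filtration of Remark 2.4 together with the vanishing of the relevant $\Hom$'s in $\db(R)$ (each graded piece is a shift of a module killed by $z_j^N$, and one lifts the nullhomotopy along the finite filtration). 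Now set $x_i=z_i^N$ and $I=(x_1,\dots,x_r)$ with $r=m$; then $\V(I)=\V(I_0)=\supp X$ and each $x_i$ acts as zero on $X$ in $\db(R)$. The only delicate point here is justifying that $I_0^N$-multiplication being zero on all cohomology forces the chain map to be zero in the derived category, which is exactly where Remark 2.4 and the finiteness of the filtration are used.

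For part (2)(a), recall that $X'=X\ltensor_R\K(y_1,\dots,y_s)$ and that $\K(y_1,\dots,y_s)$ has a finite filtration whose subquotients are shifts $R[-j]$ for $0\le j\le s$; hence $X'$ has a finite filtration in $\db(R)$ with subquotients $X[-j]$, $0\le j\le s$. Localizing at $\q$ and using Remark 2.5(2) repeatedly together with Remark 2.5(1), one gets $\depth X'_\q\ge\depth X_\q-s$. Adding $\height\p/\q$ to both sides and using the hypothesis $\height\p/\q+\depth X_\q\ge n$ yields $\height\p/\q+\depth X'_\q\ge n-s$, as required. (Equivalently, one could cite the standard fact $\depth(X\ltensor_R\K(\underline y))_\q\ge\depth X_\q-s$ directly.)

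For part (2)(b), the idea is that tensoring with $\K(y_1,\dots,y_s)$ changes $X$ by a bounded complex built out of shifts of $X$, and the hypothesis $\Gamma_{Y/I}(R/I)=J/I$ together with $J\supseteq I$ and $I$ being a koszulizing ideal for $X$ from part (1) is what lets us trade the annihilator $\c$ of $\H_Z^{<(n-s)}(X')$ for an annihilator $\b$ of $\H_Z^{<n}(X)$ at the cost of the shift by $s$. Concretely, I would argue as follows. Since each $x_i$ acts as zero on $X$, each $x_i$ acts as zero on $X'=X\ltensor_R\K(\underline y)$ as well, so $I\H_Z^j(X')=0$ for all $j$; combined with $\c\H_Z^{<(n-s)}(X')=0$ we may enlarge $\c$ and assume $I\subseteq\c$, still with $\V(\c)\subseteq Y$ (adding $I$ does not move the variety outside $Y$ because $\V(I)=\supp X\subseteq$ ... ) — more carefully, one takes $\c'=\c+\a$ where $\V(\a)\subseteq Y$, using that $J/I=\Gamma_{Y/I}(R/I)$ has support inside $Y/I$ so $\V(J)\subseteq Y\cup\V(I)$, hmm, the cleanest route is to keep $\c$ and separately control $I$. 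Then from the exact triangle $X\ltensor_R\K(y_2,\dots,y_s)\xrightarrow{y_1}X\ltensor_R\K(y_2,\dots,y_s)\to X'\to$ and induction on $s$, one relates $\H_Z^{<n}(X)$ to $\H_Z^{<(n-s)}(X')$ up to multiplication by powers of the $y_i$, i.e. up to multiplication by $J^{t}$ for some $t$; since $J/I=\Gamma_{Y/I}(R/I)$ there is an ideal $\a$ with $\V(\a)\subseteq Y$ and $\a J\subseteq I$ (indeed $\a$ can be taken so that $\a\cdot(J/I)=0$ in $R/I$ because $J/I$ is $Y/I$-torsion and finitely generated, hence killed by an ideal whose variety lies in $Y/I$, which pulls back to an ideal whose variety lies in $Y$). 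Then $\b=\a^{t}\c$ (or a suitable product of powers of $\a$, $\c$, and the koszulization ideal $I$) satisfies $\V(\b)\subseteq\V(\a)\cup\V(\c)\subseteq Y$ and $\b\H_Z^{<n}(X)=0$.

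I expect the main obstacle to be part (2)(b): one must carefully track how local cohomology with respect to $Z$ transforms along the $s$-fold Koszul tower, identify the precise power of $J$ that appears, and then use the torsion hypothesis $\Gamma_{Y/I}(R/I)=J/I$ to convert that power of $J$ into an ideal whose variety is contained in $Y$. The bookkeeping with the two ideals $I$ (from part (1), which annihilates $X$ in $\db(R)$) and $J$ (which contains $I$ and whose image is the $Y/I$-torsion of $R/I$) is what makes this subtle, and getting the inequality of indices ($<n$ for $X$ versus $<(n-s)$ for $X'$) to match up with the $s$-step induction is where I would be most careful.
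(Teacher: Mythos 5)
Your parts (1) and (2)(a) are fine. For (1) the paper simply cites \cite[Proposition 2.3(2)]{dm}; your sketch via powers of generators plus the finite truncation filtration is in the right spirit, though the jump from ``$I_0^N$ kills all cohomology'' to ``$z_j^N$ kills $X$ in $\db(R)$'' needs the amplitude of $X$ worked into the exponent. For (2)(a) you argue exactly as the paper does: peel off the $y_i$ one at a time, using Remark \ref{1} to drop the depth by at most one per step.

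The genuine gap is in (2)(b), specifically in the sentence claiming that one ``relates $\H_Z^{<n}(X)$ to $\H_Z^{<(n-s)}(X')$ up to multiplication by powers of the $y_i$, i.e.\ up to multiplication by $J^t$ for some $t$.'' That is not the mechanism, and it is not how the long exact sequence can be used. Put $X_i = X\ltensor_R\K(y_1,\dots,y_i)$ and look at the piece $\H_Z^{k-1}(X_i)\to\H_Z^{k}(X_{i-1})\xrightarrow{\ y_i\ }\H_Z^{k}(X_{i-1})$ of the long exact sequence. What this gives you is control of $\ker\bigl(y_i\colon\H_Z^{k}(X_{i-1})\to\H_Z^{k}(X_{i-1})\bigr)$ by the inductive annihilator of $\H_Z^{k-1}(X_i)$; it does not say anything useful about $y_i\H_Z^{k}(X_{i-1})$ or $J^t\H_Z^{<n}(X)$. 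To pass from ``the kernel of $y_i$ is killed'' to ``everything is killed'' one needs the extra input from part (1): since each $x_p$ is zero on $X$ in $\db(R)$, applying $\H_Z^{k}\bigl(-\ltensor_R\K(y_1,\dots,y_{i-1})\bigr)$ shows $I\H_Z^{k}(X_{i-1})=0$, and hence $(I:y_i)$ carries $\H_Z^{k}(X_{i-1})$ into $\ker(y_i)$. This is why the annihilator one actually manufactures is $\b_i=\c\,(I:y_s)\cdots(I:y_{i+1})$, descending from $\b_s=\c$ to $\b_0=\c\prod_{i=1}^s(I:y_i)$; it involves the colon ideals $(I:y_i)$, not powers of $J$. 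The final step, showing $\V(I:y_i)\subseteq Y$ from $\overline{y_i}\in J/I=\Gamma_{Y/I}(R/I)$, you do have, and indeed your $\a$ satisfies $\a\subseteq(I:y_i)$ for every $i$, so your proposed $\b=\a^s\c$ would work a posteriori — but the argument you give for why it annihilates $\H_Z^{<n}(X)$ does not go through as written. You need to replace the ``$J^t$'' heuristic by the precise descending induction on $i$ using the colon ideals $(I:y_i)$ and the $I$-annihilation of the local cohomologies of the partial Koszul tensors.
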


\begin{proof}
(1) The assertion is immediately follows from \cite[Proposition 2.3(2)]{dm}.

(2) Put $X_i=X\ltensor_R\K(y_1,\dots,y_i)$ for each $0\le i\le s$.
Then $X_0=X$ and $X_s=X'$.
For $1\le i\le s$, applying $X_{i-1}\ltensor_R-$ to the exact triangle $R\xrightarrow{y_i}R\to\K(y_i)\rightsquigarrow$, we get an exact triangle $X_{i-1}\xrightarrow{y_i}X_{i-1}\to X_i\rightsquigarrow$.

(a) For $1\le i\le s$ there is an exact triangle $(X_{i-1})_\q\xrightarrow{y_i}(X_{i-1})_\q\to(X_i)_\q\rightsquigarrow$, which induces $\depth(X_i)_\q\ge\inf\{\depth(X_{i-1})_\q,\depth(X_{i-1})_\q-1\}=\depth(X_{i-1})_\q-1$ by Remark \ref{1}(2).
Hence $\depth X'_\q\ge\depth X_\q-s$.

(b) Set $\b_i=\c(I:y_s)\cdots(I:y_{i+1})$ for every integer $0\le i\le s$.
Then, as $\b_s=\c$, we have $\b_s\H_Z^{<(n-s)}(X_s)=0$.
Fix an integer $1\le i\le s$ and assume that $\b_i\H_Z^{<(n-i)}(X_i)=0$.
For each $j\in\ZZ$ there exists an exact sequence
\begin{equation}\label{2}
\H_Z^{j-i}(X_i)\to\H_Z^{j-i+1}(X_{i-1})\xrightarrow{y_i}\H_Z^{j-i+1}(X_{i-1}).
\end{equation}
Note that $(I:y_i)\cdot y_i\H_Z^{j-i+1}(X_{i-1})$ is contained in $I\H_Z^{j-i+1}(X_{i-1})$.
By (1), we have $(X\xrightarrow{x_p}X)=0$ in $\db(R)$ for every $1\le p\le r$.
Applying the functor $\H_Z^{j-i+1}(-\ltensor_R\K(y_1,\dots,y_{i-1}))$, we observe that $(\H_Z^{j-i+1}(X_{i-1})\xrightarrow{x_p}\H_Z^{j-i+1}(X_{i-1}))=0$ in $\Mod R$ for every $1\le p\le r$, which implies $I\H_Z^{j-i+1}(X_{i-1})=0$ for all $j\in\ZZ$.
By assumption, we have $\b_i\H_Z^{j-i}(X_i)=0$ for all $j<n$.
It follows from \eqref{2} that $\b_{i-1}=\b_i(I:y_i)$ kills $\H_Z^{j-i+1}(X_{i-1})$ for any $j<n$.
By induction on $s-i$, we get $\b_0\H_Z^j(X_0)=0$ for any $j<n$, i.e., $\b_0\H_Z^{<n}(X)=0$.

It remains to show that $\V(\b_0)$ is contained in $Y$, and for this it suffices to check that $\V(I:y_i)$ is contained in $Y$ for all $1\le i\le s$.
Take any $\p\in\V(I:y_i)$.
Then there are inclusions $I\subseteq I:_Ry_i\subseteq\p$, which induce $0:_{R/I}\overline{y_i}\subseteq\p/I$.
Hence $\p/I\in\V(0:_{R/I}\overline{y_i})=\supp_{R/I}((R/I)\overline{y_i})\subseteq Y/I$, where the last inclusion follows from the fact that $\overline{y_i}\in J/I=\Gamma_{Y/I}(R/I)$.
Therefore the prime ideal $\p$ belongs to $Y$, and we are done.
\end{proof}

Here we introduce a certain locus in $\spec R$ of a finitely generated $R$-module.
This is necessary in the proof of the theorem to make the depth of a localized module high enough.

\begin{dfn}
Let $M$ be a finitely generated $R$-module.
The {\em maximal Cohen--Macaulay locus} $\mcm_R(M)$ of $M$ is defined as the set of prime ideals $\p$ of $R$ such that the $R_\p$-module $M_\p$ is maximal Cohen--Macaulay, i.e., $\depth M_\p\ge\dim R_\p$.
This inequality is equivalent to saying that $M_\p=0$ or $M_\p\ne0$ and $\depth M_\p=\dim R_\p$.
\end{dfn}

Now we can prove the main result of this section, which asserts that Faltings' annihilator theorem holds true for bounded complexes of finitely generated modules over a CM-excellent ring.

\begin{thm}\label{11}
Let $R$ be a CM-excellent ring.
Let $Y,Z$ be specialization-closed subsets of $\spec R$, and let $n$ be an integer.
Then the following two conditions are equivalent for each $X\in\db(R)$.
\begin{enumerate}[\rm(1)]
\item
For all prime ideals $\p,\q$ of $R$ with $Z\ni\p\supseteq\q\notin Y$ there is an inequality $\height\p/\q+\depth X_\q\ge n$.
\item
There exists an ideal $\b$ of $R$ such that $\V(\b)\subseteq Y$ and $\b\H_Z^{<n}(X)=0$.
\end{enumerate}
\end{thm}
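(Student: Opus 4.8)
The plan is to treat the two implications separately. The implication (2)$\Rightarrow$(1) is the softer one and is deduced from Lemma~\ref{15} by localizing; the implication (1)$\Rightarrow$(2) is where the reduction to Kawasaki's theorem \cite{K} for modules is carried out, and it is there that Lemma~\ref{3}, the maximal Cohen--Macaulay locus, and CM-excellence are used.

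For (2)$\Rightarrow$(1): from $\b\H_Z^{<n}(X)=0$ with $\V(\b)\subseteq Y$ one gets $\supp\H_Z^i(X)\subseteq\V(\b)\subseteq Y$ for $i<n$, hence $\H_Z^i(X)_\q=0$ for all $i<n$ whenever $\q\notin Y$. Fix $\p,\q$ with $Z\ni\p\supseteq\q\notin Y$; we may assume $X_\q\ne0$, the other case being trivial. Since $\H_Z^i(X)_\p\cong\H_{Z_\p}^i(X_\p)$ by \cite[Corollary~3.5(1)]{DZ}, localization at $\p$ reduces us to the case where $(R,\m)$ is local with $\m=\p\in Z$. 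If $\q\in Z$, then Lemma~\ref{15}(2) immediately yields $\depth X_\q\ge n$, which is more than enough. The general case is brought back to this one by a descending induction along a saturated chain from $\q$ to $\m$: universal catenarity keeps heights additive, and the elementary inequality $\depth X_\q\ge\depth X_{\q'}-\height\q'/\q$ (for $\q\subseteq\q'$ with $X_\q\ne0$) lets the estimate descend from a suitable prime of $Z$ between $\q$ and $\m$ lying outside $Y$; when no such prime exists one argues directly at $\q$ from the support restriction together with Lemma~\ref{15}.

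For (1)$\Rightarrow$(2): first normalize $X$ by means of Lemma~\ref{3}. Take $I=(x_1,\dots,x_r)$ with $\supp X=\V(I)$ and $x_i\cdot\id_X=0$ in $\db(R)$ (Lemma~\ref{3}(1)), let $J\supseteq I$ be the ideal with $J/I=\Gamma_{Y/I}(R/I)$, choose generators $y_1,\dots,y_s$ of $J$ that include $x_1,\dots,x_r$, and set $X'=X\ltensor_R\K(y_1,\dots,y_s)$. By Lemma~\ref{3}(2)(a) the complex $X'$ satisfies~(1) with $n$ replaced by $n-s$, and by Lemma~\ref{3}(2)(b) it suffices to produce the required ideal for $X'$ and $n-s$; since $R/J=(R/I)/\Gamma_{Y/I}(R/I)$ has vanishing $(Y/J)$-torsion, the passage to $X'$ moreover secures $\ass_R(R/J)\cap Y=\emptyset$ with $\supp X'=\V(J)$. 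Assuming these properties, induct on $\sup X-\inf X$. When this vanishes, $X\cong M[-t]$ for a module $M$, condition~(1) for $X$ is exactly condition~(1) of \cite{K} for $M$ and $n-t$, and Kawasaki's theorem supplies the ideal. For the inductive step, peel off the top cohomology by the triangle $X_1\to X\to\H^s(X)[-s]\rightsquigarrow$ of Remark~\ref{4} ($s=\sup X$, $X_1$ of strictly smaller amplitude); the long exact sequence of $\rg_Z$ shows that ideals working for $\H^s(X)[-s]$ with bound $n$ (via \cite{K} applied to $\H^s(X)$ and $n-s$) and for $X_1$ with bound $n$ (by the inductive hypothesis) multiply to an ideal working for $X$ with bound $n$.

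The hard part, and the main obstacle, is the transmission of condition~(1) from $X$ to the two pieces of the truncation triangle: it does \emph{not} hold automatically, because the depth of a complex is not governed by the depths of its cohomology modules --- a twist in $X$ can push $\depth X_\q$ above $\min_q\{\depth\H^q(X)_\q+q\}$, and the available comparison of $\depth X_\q$, $\depth(X_1)_\q$ and $\depth\H^s(X)_\q$ runs the wrong way. This is where the maximal Cohen--Macaulay locus and CM-excellence intervene: by condition~(C3) the locus $\mcm_R(M)$ of $M=\H^s(X)$ is Zariski-open, and --- using the normalization $\ass_R(R/J)\cap Y=\emptyset$ together with prime avoidance and a closer analysis of the associated primes involved --- one can enlarge the Koszul twist so as to force $M_\q$ to be maximal Cohen--Macaulay, hence of the maximal possible depth $\dim R_\q$, for every prime $\q\notin Y$ in $\supp X$. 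Once that is arranged, the triangle gives $\depth(X_1)_\q\ge\depth X_\q$ (so $X_1$ inherits~(1) with bound $n$) and $\depth M_\q=\dim R_\q\ge\depth X_\q-s$ (so $M$ satisfies condition~(1) of \cite{K} with bound $n-s$, using $\depth X_\q\le\dim R_\q+s$), which closes the induction; extracting the annihilator ideals from the two applications and taking their product produces the required $\b$.
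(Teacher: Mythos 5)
Your high-level strategy---normalize $X$ via Lemma~\ref{3}, reduce to Kawasaki's theorem for modules, and use CM-excellence to open up a Cohen--Macaulay locus---is the same as the paper's, but both implications have genuine gaps in the details, and the induction scheme you use for (1)$\Rightarrow$(2) is not the one that actually closes.

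For (2)$\Rightarrow$(1), the paper simply sets $Y'=Y\cup Z$ and invokes \cite[Theorem~4.5]{DZ} (applicable because $Z\subseteq Y'$ and $\V(\b)\subseteq Y'$) to dispose of all $\q\notin Y'$, reserving Lemma~\ref{15}(2) for the remaining case $\q\in Z\setminus Y$. Your replacement, a ``descending induction along a saturated chain from $\q$ to $\m$,'' leans on the inequality $\depth X_\q\ge\depth X_{\q'}-\height\q'/\q$ for complexes, which is not established by any of the paper's lemmas, and it never actually resolves the case where every prime of $Z$ lying between $\q$ and $\p$ belongs to $Y$: the sentence ``when no such prime exists one argues directly at $\q$'' is a placeholder, not an argument, since $\q\notin Z$ rules out applying Lemma~\ref{15}(2) at $\q$.

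For (1)$\Rightarrow$(2), you correctly identify the crux---condition~(1) does not automatically pass to the truncation pieces---but the proposed fix is not available. You cannot ``enlarge the Koszul twist so as to force $M_\q$ to be maximal Cohen--Macaulay for every prime $\q\notin Y$ in $\supp X$'': the non-MCM locus of $\H(X')$ over $R/J$ is a closed subset $\V(\a/J)$ of $\V(J)$ that in general is \emph{not} contained in $Y$, and further Koszul twists change the module and hence its MCM locus, so nothing converges without an induction on support. (Also, ``$\depth M_\q=\dim R_\q$'' is wrong: $M$ is an $R/J$-module, so MCM at $\q$ means $\depth M_\q=\height\q/J$, not $\height\q$, and the paper tracks $\height\q/J$ carefully throughout Claim~\ref{9}.) What the paper actually does is weaker and correspondingly works: it chooses $a\in\a$ avoiding $\Min R/J$, proves the depth estimate (Claim~\ref{9}) for \emph{all} cohomology modules $\H^{s_i}(X')$ simultaneously but only for $\q\notin Y\cup\V(a)$, uses the MCM property of the \emph{total} cohomology $\H(X')_\q$ over $(R/J)_\q$, applies Kawasaki to get $\b$ with only $\V(\b)\subseteq Y\cup\V(a)$, then finds $b\in\b$ avoiding $\Min R/J$ (Claim~\ref{add}, using $\Gamma_{Y/I}(R/J)=0$), twists once more by $b$ to make $\supp X''\subsetneq\supp X$, and closes by \emph{noetherian induction on $\supp X$}. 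Your induction on amplitude cannot stand in for this: the annihilator you obtain at each stage is only supported in $Y\cup\V(a)$, the Koszul twist needed to remedy this can increase amplitude, and the truncated complex $X_1$ no longer satisfies $(X_1\xrightarrow{y}X_1)=0$ for $y\in J$, so it would need to be re-normalized with no control on the resulting amplitude.
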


\begin{proof}
(2) $\Rightarrow$ (1):
Put $Y'=Y\cup Z$.
This is a specialization-closed subset of $\spec R$.
As $\V(\b)\subseteq Y\subseteq Y'$ and $Z\subseteq Y'$, we can apply \cite[Theorem 4.5]{DZ} to see that $\height\p/\q+\depth X_\q\ge n$ for all $Z\ni\p\supseteq\q\notin Y'$.

Now, fix prime ideals $\p,\q$ such that $Z\ni\p\supseteq\q\notin Y$.
If $\q$ is not in $Z$, then we have $Z\ni\p\supseteq\q\notin Y'$ and get $\height\p/\q+\depth X_\q\ge n$.
Suppose $\q\in Z$.
Since $\q$ is not in $Y$, it does not contain $\b$.
Hence $0=(\b\H_Z^i(X))_\q=\H_Z^i(X)_\q$ for all $i<n$.
Lemma \ref{15}(2) implies $\depth X_\q\ge n$, and therefore $\height\p/\q+\depth X_\q\ge n$.

(1) $\Rightarrow$ (2):
Assume that (1) holds.
We shall deduce (2) by noetherian induction on $\supp X$.
If $\supp X=\emptyset$, then $X\cong0$ in $\db(R)$, and (2) holds by letting $\b=R$.
Let $\supp X\ne\emptyset$.
Then $X\ncong0$ in $\db(R)$.
Use Lemma \ref{3}(1) to find an ideal $I=(x_1,\dots,x_r)$ of $R$ such that $\supp X=\V(I)$ and $(X\xrightarrow{x_i}X)=0$ in $\db(R)$ for all $i$.
Choose an ideal $J=(y_1,\dots,y_s)$ of $R$ such that $I\subseteq J$ and $\Gamma_{Y/I}(R/I)=J/I$.
Set $X'=X\ltensor_R\K(y_1,\dots,y_s)\in\db(R)$.
We have $\supp X'=\supp X\cap\V(J)=\V(I)\cap\V(J)=\V(J)$, where the first equality follows from \cite[Lemma 1.9(4) and Proposition 2.3(3)]{dm}.
According to Lemma \ref{3}(2a), we have
\begin{equation}\label{5}
\height\p/\q+\depth X'_\q\ge n-s\ \ \text{for all}\ \ \p,\q\in\spec R\ \ \text{such that}\ \ Z\ni\p\supseteq\q\notin Y.
\end{equation}
In view of Lemma \ref{3}(2b), we will be done if we find an ideal $\c$ of $R$ such that $\V(\c)\subseteq Y$ and $\c\H_Z^{<(n-s)}(X')=0$.
To show this, we may assume $X'\ncong0$ in $\db(R)$, and then $J$ is a proper ideal of $R$.
It follows from \cite[Remark 2.7]{dec} that $(X'\xrightarrow{y_j}X')=0$ in $\db(R)$ for all $j$.
Hence $(\H(X')\xrightarrow{y_j}\H(X'))=0$ in $\mod R$ for all $j$, which means that $J$ annihilates $\H(X')$, or in other words, $\H(X')\in\mod R/J$.
As $R$ is CM-excellent, any finitely generated $R$-algebra has open Cohen--Macaulay locus.
In particular, for every prime ideal $P/J$ of $\spec R/J$ the locus $\cm((R/J)/(P/J))=\cm(R/P)$ is open and nonempty as it contains the zero ideal.
The ring $R/J$ satisfies \cite[(5.0.1)]{Ki}, and $\mcm_{R/J}(\H(X'))$ is an open subset of $\spec R/J$ by \cite[Corollary 5.5(3)]{Ki}.
There is an ideal $\a$ of $R$ containing $J$ such that $\mcm_{R/J}(\H(X'))=\D(\a/J)$.
Note that $\mcm_{R/J}(\H(X'))$ contains $\Min R/J$.
Prime avoidance gives an element $a\in\a$ with $a\notin\bigcup_{P\in\Min_RR/J}P$.
Apply Remark \ref{4} to $X'$ to get a series
\begin{equation}\label{8}
\{\,X_{i+1}\to X_i\to\H^{s_i}(X')[-s_i]\rightsquigarrow\,\}_{i=0}^e
\end{equation}
of exact triangles in $\db(R)$ such that $X_0=X'$, $X_{e+1}=0$, $s_i=\sup X_i\in\ZZ$, $\H^j(X_i)=\H^j(X')$ for all $j\le s_i$, and $\sup X'=s_0>\cdots>s_e=\inf X'$.
We establish a claim.

\begin{claim}\label{9}
For all integers $0\le i\le e$ and for all prime ideals $\p,\q$ of $R$ such that $Z\ni\p\supseteq\q\notin Y\cup\V(a)$, one has the inequality $\height\p/\q+\depth\H^{s_i}(X')_\q\ge n-s-s_i$.
\end{claim}

\begin{proof}[Proof of Claim \ref{9}]
Fix an integer $0\le i\le e$ and prime ideals $\p,\q$ with $Z\ni\p\supseteq\q\notin Y\cup\V(a)$.
The zero module has depth $\infty$, so that we may assume $\H^{s_i}(X')_\q\ne0$.
Since $\H^{s_i}(X')$ is a direct summand of $\H(X')$,  we have $\q\in\supp\H^{s_i}(X')\subseteq\supp\H(X')\subseteq\V(J)$.
As $\q$ is not in $\V(a)$, we see that $\q/J\in\D(\a/J)=\mcm_{R/J}(\H(X'))$.
This shows the second inequality below, while the first holds as $\H^{s_i}(X')_\q$ is a direct summand of $\H(X')_\q$.
$$
\depth\H^{s_i}(X')_\q\ge\depth\H(X')_\q=\depth\H(X')_{\q/J}\ge\dim(R/J)_{\q/J}=\height\q/J.
$$
Since $\H^{s_i}((X_i)_\q)=\H^{s_i}(X_i)_\q=\H^{s_i}(X')_\q\ne0$, it holds that $(X_i)_\q\ncong0$ in $\db(R_\q)$, and we get $\depth(X_i)_\q\le\inf(X_i)_\q+\dim\H^{\inf(X_i)_\q}((X_i)_\q)$ by \cite[(1.4.4)]{Ch}.
We have $\inf(X_i)_\q\le\sup X_i=s_i$, while
$$
\dim\H^{\inf(X_i)_\q}((X_i)_\q)\le\dim\H(X_i)_\q\le\dim\H(X')_\q=\height(\q/\ann\H(X'))\le\height\q/J\le\depth\H^{s_i}(X')_\q.
$$
Here, the first two inequalities come from the fact that $\H^{\inf(X_i)_\q}((X_i)_\q)$ is a direct summand of $\H(X_i)_\q$, which is a direct summand of $\H(X')_\q$.
The inclusion $J\subseteq\ann\H(X')$ shows the third inequality.
Therefore,
\begin{equation}\label{6}
\depth(X_i)_\q\le\inf(X_i)_\q+\dim\H^{\inf(X_i)_\q}((X_i)_\q)\le s_i+\depth\H^{s_i}(X')_\q=\depth(\H^{s_i}(X')[-s_i])_\q,
\end{equation}
where the equality follows from Remark \ref{1}(1).
Applying Remark \ref{1}(2) to the exact triangle $(X_{i+1})_\q\to(X_i)_\q\to(\H^{s_i}(X')[-s_i])_\q\rightsquigarrow$ in $\db(R_\q)$, we observe that
$$
\depth(X_{i+1})_\q\ge\inf\{\depth(X_i)_\q,\,\depth(\H^{s_i}(X')[-s_i])_\q+1\}=\depth(X_i)_\q.
$$

Consequently, for each integer $0\le i\le e$ it holds that
\begin{equation}\label{7}
\depth(X_i)_\q\ge\depth(X_{i-1})_\q\ge\cdots\ge\depth(X_0)_\q=\depth X'_\q.
\end{equation}
It follows from \eqref{5}, \eqref{6} and \eqref{7} that for all integers $0\le i\le e$ there are inequalities
$$
\height\p/\q+\depth\H^{s_i}(X')_\q
\ge\height\p/\q+(\depth(X_i)_\q-s_i)
\ge\height\p/\q+(\depth X'_\q-s_i)
\ge n-s-s_i.
$$
Now the proof of the claim is completed.
\renewcommand{\qed}{$\square$}
\end{proof}

By Claim \ref{9}, we can apply \cite[Theorem 1.1]{K} to the module $\H^{s_i}(X')$ for $0\le i\le e$ to find an ideal $\b_i$ of $R$ with $\V(\b_i)\subseteq Y\cup\V(a)$ and $\b_i\H_Z^{<(n-s-s_i)}(\H^{s_i}(X'))=0$.
Hence $\b_i\H_Z^{<(n-s)}(\H^{s_i}(X')[-s_i])=0$.
Applying $\H_Z$ to the exact triangles \eqref{8} shows $\b\H_Z^{<(n-s)}(X')=0$, where $\b:=\b_0\b_1\cdots\b_e$.
We establish another claim.

\begin{claim}\label{add}
There exists an element $b\in\b$ such that $b\notin\bigcup_{P\in\Min_RR/J}P$.
\end{claim}

\begin{proof}[Proof of Claim \ref{add}]
Suppose that the ideal $\b$ is contained in some $P\in\Min_RR/J=\min\V(J)$.
Then $I\subseteq J\subseteq P$ and $P\in\V(\b)\subseteq Y\cup\V(a)$.
The choice of the element $a$ shows $P\in Y$, and hence $P/I\in Y/I$.
It is seen that
$$
P/I\in\min\V(J/I)=\Min_{R/I}((R/I)/(J/I))=\Min_{R/I}R/J\subseteq\ass_{R/I}R/J.
$$
Hence, $P/I$ belongs to the set $(\ass_{R/I}R/J)\cap Y/I$.
However, since there are equalities
$$
0=\Gamma_{Y/I}((R/I)/\Gamma_{Y/I}(R/I))=\Gamma_{Y/I}((R/I)/(J/I))=\Gamma_{Y/I}(R/J),
$$
the set $(\ass_{R/I}R/J)\cap Y/I$ is empty; see \cite[Proposition 3.2(1b)(3)]{cd}.
This contradition shows that none of the prime ideals $P\in\Min_RR/J$ contains $\b$.
Prime avoidance completes the proof of the claim.
\renewcommand{\qed}{$\square$}
\end{proof}

Since $b$ is in $\b$, we have $b\H_Z^{<(n-s)}(X')=0$.
Set $X''=X'\ltensor_R\K(b)\in\db(R)$.
Applying the functor $X'\ltensor_R-$ to the exact triangle $R\xrightarrow{b}R\to\K(b)\rightsquigarrow$ gives an exact triangle $X'\xrightarrow{b}X'\to X''\rightsquigarrow$.
There are equalities
$$
\supp X''=\supp X'\cap\V(b)=\V(J)\cap\V(b)=\V(J+(b))\subsetneq\V(J)\subseteq\V(I)=\supp X.
$$
Here, the first equality follows from \cite[Lemma 1.9(4) and Proposition 2.3(3)]{dm} again.
The strict inclusion there follows since we can take a minimal prime ideal $P$ of $J$, and then the choice of $b$ shows that $P$ does not contain $b$.
Let $\p,\q$ be prime ideals of $R$ such that $Z\ni\p\supseteq\q\notin Y$.
Then $\depth X''_\q\ge\depth X'_\q-1$ by Remark \ref{1}(2), and hence $\height\p/\q+\depth X''_\q\ge n-s-1$ by \eqref{5}.
Apply the induction hypothesis to $X''$ to find an ideal $\c$ of $R$ such that $\V(\c)\subseteq Y$ and $\c\H_Z^{<(n-s-1)}(X'')=0$.
The exact sequence $\H_Z^{i-1}(X'')\to\H_Z^i(X')\xrightarrow{b}\H_Z^i(X')$ is induced for each integer $i$, and the vanishings $b\H_Z^i(X')=\c\H_Z^{i-1}(X'')=0$ for all $i<n-s$ show that $\c\H_Z^i(X')=0$ for all $i<n-s$.
The proof of the theorem is now completed.
\end{proof}

We close the section by mentioning the relationships of the above theorem with results in the literature.

\begin{rem}\label{13}
\begin{enumerate}[(1)]
\item
If we restrict Theorem \ref{11} to the case where the complex $X\in\db(R)$ is a module, then the theorem asserts the same as \cite[Theorem 1.1]{K}.
\item
Taking Remark \ref{22}(4) into account, one sees that Theorem \ref{11} holds for any homomorphic image of a Cohen--Macaulay ring, and hence it holds for any ring admitting a dualizing complex.
\item
Theorem \ref{11} removes the assumption in \cite[Theorem 4.5]{DZ} that $Y$ contains $Z$ and weakens the assumtion there that $R$ is a homomorphic image of a Gorenstein ring of finite Krull dimension to the assumption that $R$ is CM-excellent (by Remark \ref{22}(4)).
\end{enumerate}
\end{rem}

\section{Filtrations by supports and functions on the prime ideals}

In this section, we study sp-filtrations of $\spec R$ by interpreting them as order-preserving functions on $\spec R$.
First of all, let us recall the definitions of an sp-filtration and the weak Cousin condition.

\begin{dfn}
Let $\phi:\ZZ\to2^{\spec R}$ be a map.
\begin{enumerate}[(1)]
\item
We say that $\phi$ is a {\em filtration by supports} of $\spec R$, an {\em sp-filtration} of $\spec R$ for short, provided that for all integers $i$ the subset $\phi(i)$ of $\spec R$ is specialization-closed and there is an inclusion $\phi(i)\supseteq\phi(i+1)$.
\item
Suppose that $\phi$ is an sp-filtration of $\spec R$.
We say that $\phi$ satisfies the {\em weak Cousin condition} provided that for all $i\in\ZZ$ and all saturated inclusions $\p\subsetneq\q$ in $\spec R$, if $\q\in\phi(i)$, then $\p\in\phi(i-1)$.
\end{enumerate}
\end{dfn}

We construct a correspondence between maps from $\ZZ$ to $2^{\spec R}$ and maps from $\spec R$ to $\ZZ\cup\{\pm\infty\}$.

\begin{dfn}
Let $\phi:\ZZ\to2^{\spec R}$ and $f:\spec R\to\ZZ\cup\{\pm\infty\}$ be maps.
We set:
\begin{align*}
\F(\phi)(\p)&=\sup\{i\in\ZZ\mid\p\in\phi(i)\}+1\ \ \text{for each}\ \ \p\in\spec R,\ \ \text{and}\\
\Phi(f)(i)&=\{\p\in\spec R\mid f(\p)>i\}\ \ \text{for each}\ \ i\in\ZZ.
\end{align*}
Then we get maps $\F(\phi):\spec R\to\ZZ\cup\{\pm\infty\}$ and $\Phi(f):\ZZ\to2^{\spec R}$.
Note that $\F\Phi(f)(\p)=\sup\{i+1\mid\p\in\Phi(f)(i)\}=\sup\{i+1\mid f(\p)\ge i+1\}=f(\p)$ for each $\p\in\spec R$.
Hence the equality $\F\Phi=\id$ holds.
\end{dfn}

Through the maps constructed above, each sp-filtration of $\spec R$ can be interpreted as an order-preserving map from $\spec R$ to $\ZZ\cup\{\pm\infty\}$, that is, one has the following one-to-one correspondence.

\begin{prop}\label{16}
The assignments $\phi\mapsto\F(\phi)$ and $f\mapsto\Phi(f)$ give mutually inverse bijections
$$
\xymatrix{
{\{\,\text{sp-filtrations of $\spec R$}\,\}}
\ar@<.7mm>[r]^-\F
&
{\{\,\text{order-preserving maps from $\spec R$ to $\ZZ\cup\{\pm\infty\}$}\,\}}.
\ar@<.7mm>[l]^-\Phi
}
$$
\end{prop}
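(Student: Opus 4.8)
The plan is to verify four things: that $\F$ carries sp-filtrations to order-preserving maps, that $\Phi$ carries order-preserving maps to sp-filtrations, and that both composites $\F\Phi$ and $\Phi\F$ equal the identity. The equality $\F\Phi=\id$ has already been noted in the text preceding this statement, so only the two well-definedness claims and the equality $\Phi\F=\id$ remain; each is an elementary manipulation of suprema, and the only thing requiring attention is the bookkeeping of the ``$+1$'' in the definition of $\F$ together with the conventions for the values $\pm\infty$ that arise as suprema of empty or of unbounded subsets of $\ZZ$.

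First I would check that $\F(\phi)$ is order-preserving. If $\p\subseteq\q$ in $\spec R$, then $\q\in\V(\p)$, so for each $i\in\ZZ$ the specialization-closedness of $\phi(i)$ gives $\p\in\phi(i)\Rightarrow\q\in\phi(i)$; hence $\{i\in\ZZ\mid\p\in\phi(i)\}\subseteq\{i\in\ZZ\mid\q\in\phi(i)\}$, and taking suprema and adding $1$ yields $\F(\phi)(\p)\le\F(\phi)(\q)$. Next, for order-preserving $f$ I would check that $\Phi(f)$ is an sp-filtration: if $\p\in\Phi(f)(i)$ and $\q\in\V(\p)$ then $\q\supseteq\p$, so $f(\q)\ge f(\p)>i$ and $\q\in\Phi(f)(i)$, so $\Phi(f)(i)$ is specialization-closed; and $f(\p)>i+1$ implies $f(\p)>i$, so $\Phi(f)(i)\supseteq\Phi(f)(i+1)$.

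Finally, for $\Phi\F=\id$, fix an sp-filtration $\phi$ and $i\in\ZZ$. Since $\F(\phi)(\p)=\sup\{j\mid\p\in\phi(j)\}+1$ lies in $\ZZ\cup\{\pm\infty\}$, the condition $\F(\phi)(\p)>i$ is equivalent to $\sup\{j\mid\p\in\phi(j)\}\ge i$, whence
$$
\Phi(\F(\phi))(i)=\{\p\in\spec R\mid\sup\{j\in\ZZ\mid\p\in\phi(j)\}\ge i\}.
$$
The inclusion $\phi(i)\subseteq\Phi(\F(\phi))(i)$ is clear. Conversely, if $\sup\{j\mid\p\in\phi(j)\}\ge i$, then this set is nonempty, and since a nonempty subset of $\ZZ$ bounded above attains its supremum while an unbounded one contains arbitrarily large integers, there is $j\ge i$ with $\p\in\phi(j)$; as $\phi$ is descending, $\phi(j)\subseteq\phi(i)$, so $\p\in\phi(i)$. (When $\{j\mid\p\in\phi(j)\}$ is empty its supremum is $-\infty<i$, matching $\p\notin\phi(i)$.) This gives $\Phi\F=\id$, and with $\F\Phi=\id$ it follows that $\F$ and $\Phi$ are mutually inverse bijections. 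I anticipate no real obstacle; the subtlest point is merely the case distinction on whether the relevant supremum is $-\infty$, a finite integer, or $+\infty$.
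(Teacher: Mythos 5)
Your proof is correct and follows essentially the same route as the paper: verify that $\F$ lands in order-preserving maps, that $\Phi$ lands in sp-filtrations, and that $\Phi\F=\id$ (with $\F\Phi=\id$ already noted). Your treatment of the case where $\{j\mid\p\in\phi(j)\}$ is unbounded above is slightly more explicit than the paper's, but the argument is the same.
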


\begin{proof}
Fix an sp-filtration $\phi:\ZZ\to2^{\spec R}$ and an order-preserving map $f:\spec R\to\ZZ\cup\{\pm\infty\}$.

If $f(\p)>i+1$, then $f(\p)>i$.
Hence $\Phi(f)(i)\supseteq\Phi(f)(i+1)$ for all $i\in\ZZ$.
Since $f$ is order-preserving, if $\p\in\Phi(f)(i)$ and $\q\in\V(\p)$, then $f(\q)\ge f(\p)>i$ and $\q\in\Phi(f)(i)$.
Thus, $\Phi(f)$ is an sp-filtration of $\spec R$.

Let $\p\subseteq\q$ be an inclusion in $\spec R$.
Since $\phi(i)$ is specialization-closed, if $\p\in\phi(i)$, then $\q\in\phi(i)$.
Hence $\F(\phi)(\p)=\sup\{i+1\mid\p\in\phi(i)\}\le\sup\{i+1\mid\q\in\phi(i)\}=\F(\phi)(\q)$.
Thus, $\F(\phi)$ is an order-preserving map.

We have $\Phi\F(\phi)(i)=\{\p\in\spec R\mid\F(\phi)(\p)>i\}$.
If $\F(\phi)(\p)>i$, then $\sup\{j\in\ZZ\mid\p\in\phi(j)\}\ge i$, and there is an integer $j\ge i$ such that $\p\in\phi(j)$, which implies $\p\in\phi(j)\subseteq\phi(i)$.
Conversely, if $\p\in\phi(i)$, then $i+1\in\{j+1\mid\p\in\phi(j)\}$ and $i+1\le \F(\phi)(\p)$.
We have shown that $\F(\phi)(\p)>i$ if and only if $\p\in\phi(i)$, and it follows that $\Phi\F(\phi)(i)=\phi(i)$ for all $i\in\ZZ$.
Thus, $\Phi\F=\id$.
Now the proof of the proposition is completed.
\end{proof}

We define a function on $\spec R$ to interpret an sp-filtration of $\spec R$ satisfying the weak Cousin condition.

\begin{dfn}
We say that a map $f:\spec R\to\ZZ\cup\{\pm\infty\}$ is a {\em $t$-function} on $\spec R$ provided that for all inclusions $\p\subseteq\q$ in $\spec R$ one has the inequalities $f(\p)\le f(\q)\le f(\p)+\height\q/\p$.
\end{dfn}

\begin{ex}
\begin{enumerate}[(1)]
\item
Let $f:\spec R\to\ZZ\cup\{\pm\infty\}$ be a map, and let $n$ be an integer.
\begin{enumerate}[\rm(a)]
\item
If $f(\p)=n$ for all $\p\in\spec R$ (i.e., $f$ is a constant function), then $f$ is a $t$-function of $\spec R$.
\item
More generally than (a), if $f$ is a $t$-function on $\spec R$, then so is the map $f+n$ given by $\p\mapsto f(\p)+n$.
\end{enumerate}
\item
If $R$ is catenary, then the map $\p\mapsto\height\p$ is a $t$-function of $\spec R$.
\end{enumerate}
\end{ex}

The $t$-functions on $\spec R$ can be characterized in terms of saturated inclusions of prime ideals.

\begin{prop}\label{17}
A map $f:\spec R\to\ZZ\cup\{\pm\infty\}$ is a $t$-function on $\spec R$ if and only if for all saturated inclusions $\p\subsetneq\q$ in $\spec R$ the inequalities $f(\p)\le f(\q)\le f(\p)+1$ hold.
\end{prop}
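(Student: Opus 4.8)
The plan is to deduce the forward implication immediately from the definition, and to obtain the backward implication by refining an arbitrary inclusion of prime ideals to a saturated chain and telescoping. For the forward direction, if $f$ is a $t$-function and $\p\subsetneq\q$ is saturated, then $\height\q/\p=1$, so the defining inequalities $f(\p)\le f(\q)\le f(\p)+\height\q/\p$ reduce to exactly $f(\p)\le f(\q)\le f(\p)+1$. (Here and below, arithmetic in $\ZZ\cup\{\pm\infty\}$ uses the conventions $(\pm\infty)+n=\pm\infty$, under which all the inequalities behave correctly under telescoping.)

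For the backward direction, assume the stated inequalities hold for every saturated inclusion, and fix an inclusion $\p\subseteq\q$ in $\spec R$. If $\p=\q$ there is nothing to prove, so assume $\p\subsetneq\q$ and set $h=\height\q/\p=\dim R_\q/\p R_\q$, which is a finite positive integer because $R_\q$ is a noetherian local ring. I would then choose a saturated chain of prime ideals $\p=\p_0\subsetneq\p_1\subsetneq\cdots\subsetneq\p_\ell=\q$; note $\ell\le h$, since this is a chain of primes in $R_\q/\p R_\q$. Applying the hypothesis to each saturated step $\p_i\subsetneq\p_{i+1}$ gives $f(\p_i)\le f(\p_{i+1})\le f(\p_i)+1$, whence $f(\p)\le f(\q)$ and $f(\q)\le f(\p)+\ell\le f(\p)+h=f(\p)+\height\q/\p$. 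Thus $f$ is a $t$-function.

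The only point requiring justification is the existence of a saturated chain from $\p$ to $\q$, which I expect to be the sole (and mild) obstacle; everything else is a direct telescoping of the hypothesis. This can be proved by induction on $h$: pick a prime $\p_1$ with $\p\subsetneq\p_1\subseteq\q$ and $\height\p_1/\p=1$ — such a prime exists because $R_\q/\p R_\q$ is a noetherian local ring of positive dimension and hence contains a height-one prime lying below its maximal ideal (for instance a minimal prime of a principal ideal generated by an element of the maximal ideal avoiding the finitely many minimal primes) — observe that $\dim R_\q/\p_1 R_\q<h$, and iterate, the process terminating since $h$ is finite. Alternatively, one may simply invoke the standard fact that in a noetherian ring every chain of prime ideals refines to a saturated one.
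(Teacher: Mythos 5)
Your proof is correct and takes essentially the same route as the paper: the forward direction is immediate, and the backward direction telescopes the hypothesis along a chain of saturated inclusions from $\p$ to $\q$. The only (inessential) difference is that the paper simply takes a chain of length exactly $\height\q/\p$ — which is automatically saturated, since a chain of maximal length admits no refinement — thereby sidestepping the separate existence argument for a saturated chain that you supply.
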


\begin{proof}
The ``only if'' part is evident. 
To show the ``if'' part, we take any inclusion $\p\subseteq\q$ of prime ideals of $R$, and put $n=\height\q/\p$.
Then there exists a chain $\p=\p_0\subsetneq\cdots\subsetneq\p_n=\q$ in $\spec R$, and then each inclusion is saturated.
Hence $f(\p_{i-1})\le f(\p_i)\le f(\p_{i-1})+1$ for all $1\le i\le n$.
We get inequalities $f(\p)=f(\p_0)\le f(\p_1)\le\cdots\le f(\p_n)=f(\q)$ and $f(\q)=f(\p_n)\le f(\p_{n-1})+1\le f(\p_{n-2})+2\le\cdots\le f(\p_0)+n=f(\p)+\height\q/\p$.
\end{proof}

There is a 1-1 correspondence between sp-filtrations satisfying the weak Cousin conditions and $t$-functions.

\begin{prop}\label{18}
The mutually inverse bijections $(F,\Phi)$ in Proposition \ref{16} induce mutually inverse bijections
$$
\xymatrix{
{\left\{\,
\begin{matrix}
\text{sp-filtrations of $\spec R$}\\
\text{satisfying the weak Cousin condition}
\end{matrix}
\,\right\}}
\ar@<.7mm>[r]^-\F
&
{\{\,\text{$t$-functions on $\spec R$}\,\}}.
\ar@<.7mm>[l]^-\Phi
}
$$
\end{prop}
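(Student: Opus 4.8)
The plan is to exploit the bijections $\F$ and $\Phi$ already constructed in Proposition~\ref{16}. Every $t$-function on $\spec R$ is in particular an order-preserving map $\spec R\to\ZZ\cup\{\pm\infty\}$, and since the identities $\F\Phi=\id$ and $\Phi\F=\id$ hold on the larger pair of sets, it suffices to prove the two inclusion statements below; restricting $\F$ and $\Phi$ to the indicated subsets then automatically produces mutually inverse bijections.
\begin{enumerate}[\rm(a)]
\item
If an sp-filtration $\phi$ satisfies the weak Cousin condition, then $\F(\phi)$ is a $t$-function on $\spec R$.
\item
If $f$ is a $t$-function on $\spec R$, then the sp-filtration $\Phi(f)$ satisfies the weak Cousin condition.
\end{enumerate}

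To prove (b), I would argue directly from the definitions. Fix an integer $i$ and a saturated inclusion $\p\subsetneq\q$ in $\spec R$ with $\q\in\Phi(f)(i)$, that is, $f(\q)>i$. As $f$ is a $t$-function and the inclusion $\p\subsetneq\q$ is saturated, Proposition~\ref{17} gives $f(\q)\le f(\p)+1$, so $f(\p)\ge f(\q)-1>i-1$, which means $\p\in\Phi(f)(i-1)$. This is exactly the weak Cousin condition for $\Phi(f)$.

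For (a) I would invoke the characterization of $t$-functions in Proposition~\ref{17}: it is enough to check $\F(\phi)(\p)\le\F(\phi)(\q)\le\F(\phi)(\p)+1$ for every saturated inclusion $\p\subsetneq\q$. The first inequality is immediate since $\F(\phi)$ is order-preserving by Proposition~\ref{16}. For the second, set $a=\sup\{i\in\ZZ\mid\q\in\phi(i)\}\in\ZZ\cup\{\pm\infty\}$, so that $\F(\phi)(\q)=a+1$; I claim $\sup\{i\in\ZZ\mid\p\in\phi(i)\}\ge a-1$, which yields $\F(\phi)(\p)\ge a$ and hence $\F(\phi)(\q)=a+1\le\F(\phi)(\p)+1$. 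If $a=-\infty$ then $\F(\phi)(\q)=-\infty$ and there is nothing to prove. If $a=+\infty$, then $\q\in\phi(i)$ for every $i$ because $\phi$ is decreasing, so the weak Cousin condition forces $\p\in\phi(i-1)$ for every $i$, giving $\sup\{i\mid\p\in\phi(i)\}=+\infty$. Finally, if $a\in\ZZ$, then $\{i\in\ZZ\mid\q\in\phi(i)\}$ is a nonempty set of integers bounded above, hence attains its supremum, so $\q\in\phi(a)$; the weak Cousin condition applied with $i=a$ then gives $\p\in\phi(a-1)$, whence $\sup\{i\mid\p\in\phi(i)\}\ge a-1$, as claimed.

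Each of these steps is short. The only place that calls for a little care is the bookkeeping with the values $\pm\infty$ in the proof of (a), together with the elementary observation that a nonempty set of integers bounded above realizes its supremum; beyond that I do not expect any genuine obstacle.
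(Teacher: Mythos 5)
Your proof is correct and follows essentially the same route as the paper's: both reduce to the two inclusion claims via Proposition~\ref{16}, verify the weak Cousin condition for $\Phi(f)$ using the saturated-chain inequality from Proposition~\ref{17}, and prove that $\F(\phi)$ is a $t$-function by splitting into the cases $\F(\phi)(\q)=-\infty$, $\F(\phi)(\q)\in\ZZ$ (where the supremum is attained), and $\F(\phi)(\q)=+\infty$. Your extra remark that a nonempty set of integers bounded above attains its supremum is exactly the implicit step in the paper's phrase ``there is an integer $n$ with $\q\in\phi(n)$ and $\F(\phi)(\q)=n+1$.''
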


\begin{proof}
Fix a saturated inclusion $\p\subsetneq\q$ of prime ideals of $R$.

Let $f$ be a $t$-function on $\spec R$.
Then Proposition \ref{16} implies that $\Phi(f)$ is an sp-filtration of $\spec R$.
Suppose that $\q$ belongs to $\Phi(f)(i)$.
We have $f(\q)\ge i+1$ and $f(\q)\le f(\p)+1$ (by Proposition \ref{17}).
Hence $f(\p)\ge i$, which means that $\p$ belongs to $\Phi(f)(i-1)$.
Therefore, $\Phi(f)$ satisfies the weak Cousin condition.

Let $\phi$ be an sp-filtration of $\spec R$ satisfying the weak Cousin condition.
Proposition \ref{16} implies $\F(\phi)(\p)\le\F(\phi)(\q)$.
We want to show the inequality $\F(\phi)(\q)\le\F(\phi)(\p)+1$.
For this, we may assume $\F(\phi)(\q)\ne-\infty$.

Consider the case $\F(\phi)(\q)\ne\infty$.
There is an integer $n$ with $\q\in\phi(n)$ and $\F(\phi)(\q)=n+1$.
As $\phi$ satisfies the weak Cousin condition, $\p$ is in $\phi(n-1)$.
Hence $n=(n-1)+1\le\F(\phi)(\p)$, and $\F(\phi)(\q)=n+1\le\F(\phi)(\p)+1$.

Next we consider the case $\F(\phi)(\q)=\infty$.
In this case, for every integer $n$ there exists an integer $m\ge n$ such that $\q\in\phi(m)$.
The weak Cousin condition on $\phi$ implies $\p\in\phi(m-1)$.
We observe that $\F(\phi)(\p)=\infty$.

We have shown that $\F(\phi)(\p)\le\F(\phi)(\q)\le\F(\phi)(\p)+1$.
Proposition \ref{17} implies that $\F(\phi)$ is a $t$-function on $\spec R$.
Applying Proposition \ref{16} again, we obtain the desired mutually inverse bijections.
\end{proof}

\section{Classification of $t$-structures of $\db(R)$}

The purpose of this section is to classify, as an application of Theorem \ref{11}, the $t$-structures of the bounded derived category $\db(R)$ of finitely generated modules over any CM-excellent ring $R$ of finite Krull dimension.
We start by recalling the definitions of a $t$-structure of a triangulated category, and its aisle and coaisle.

\begin{dfn}
Let $\T$ be a triangulated category.
A {\em $t$-structure} of $\T$ in the sense of Be\u{\i}linson, Bernstein and Deligne \cite{BBD} is by definition a pair $(\X,\Y[1])$ of full subcategories of $\T$ such that $\X[1]\subseteq\X$, $\Y[1]\supseteq\Y$, $\Hom_\T(\X,\Y)=0$ and each object $T\in\T$ admits an exact triangle $X\to T\to Y\rightsquigarrow$ in $\T$ with $X\in\X$ and $Y\in\Y$.
Then $\X$ and $\Y$ are called the {\em aisle} and the {\em coaisle} of the $t$-structure $(\X,\Y[1])$, respectively.
\end{dfn}

Here we recall a couple of basic facts concerning aisles of $t$-structures of a triangulated category.

\begin{rem}
Let $\T$ be a triangulated category.
Then the following statements hold.
\begin{enumerate}[(1)]
\item
A full subcategory $\X$ of $\T$ is the aisle of some $t$-structure of $\T$ if and only if $\X[1]\subseteq\X$, $\X$ is extension-closed (i.e., for any exact triangle $A\to B\to C\rightsquigarrow$ in $\T$ with $A,C\in\X$ one has $B\in\X$), and the inclusion functor $\X\to\T$ has a right adjoint.
For the details, we refer the reader to \cite{KV}.
\item
For each aisle $\X$ in $\T$ there uniquely exists a $t$-structure of $\T$ whose aisle coincides with $\X$.
Thus, classifying the $t$-structures of $\T$ is equivalent to classifying the aisles in $\T$.
\end{enumerate}
\end{rem}

We establish a key lemma to prove our classification theorem of $t$-structures.
We should mention that the lemma is obtained just by virtue of our Theorem \ref{11}, i.e., Faltings' annihilator theorem for complexes.

\begin{lem}\label{19}
Let $R$ be CM-excellent.
Let $\phi$ be an sp-filtration of $\spec R$ satisfying the weak Cousin condition.
Let $n\in\ZZ$ and $X\in\db(R)$.
If $\H_{\phi(i)}^{\le i}(X)=0$ for all $i<n$, then $\H_{\phi(n)}^n(X)$ is a finitely generated $R$-module.
\end{lem}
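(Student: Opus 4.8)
The plan is to reduce the finite generation of $\H_{\phi(n)}^n(X)$ to an application of Theorem \ref{11}, using the vanishing hypotheses on the lower local cohomologies to control depths after localization. First I would unwind what needs to be shown: a finitely generated module over a noetherian ring is finitely generated, so the real content is that $\H_{\phi(n)}^n(X)$, which a priori is only an $R$-module (the local cohomology of a complex at a specialization-closed set need not be finitely generated), is in fact finitely generated. The natural criterion is that a module $M$ is finitely generated iff it has finite support and is finitely generated at each point; but the cleaner route here is via Theorem \ref{11}: if we can produce an ideal $\b$ with $\V(\b)$ ``small'' (contained in the top piece of the filtration, or better yet with $\b$ acting invertibly off a proper closed set) and $\b$ annihilating the relevant lower local cohomologies, then a standard Faltings-type argument pins down $\H_{\phi(n)}^n(X)$. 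Concretely, set $Y=\phi(n+1)$ and $Z=\phi(n)$, and note that $Y\subseteq Z$ since $\phi$ is a filtration. The hypothesis $\H_{\phi(i)}^{\le i}(X)=0$ for $i<n$ should, via the inclusions $\phi(n)\supseteq\phi(i)$ and the standard behaviour of local cohomology under enlarging the support together with Lemma \ref{15}(2), translate into a depth bound of the form $\height\p/\q+\depth X_\q\ge n+1$ for all $Z\ni\p\supseteq\q\notin Y$, which is exactly condition (1) of Theorem \ref{11} with $n$ replaced by $n+1$.

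Then Theorem \ref{11} yields an ideal $\b$ with $\V(\b)\subseteq Y=\phi(n+1)$ and $\b\,\H_{\phi(n)}^{<n+1}(X)=0$, in particular $\b\,\H_{\phi(n)}^n(X)=0$. So $\H_{\phi(n)}^n(X)$ is a module over $R/\b$ whose support lies in $\phi(n+1)$. Now I would localize: for a prime $\p\notin\phi(n+1)$ the module $\H_{\phi(n)}^n(X)_\p$ vanishes, so $\supp\H_{\phi(n)}^n(X)\subseteq\phi(n+1)$. This is not yet finite generation. To upgrade, I expect to use the weak Cousin condition together with a dimension-shifting/truncation argument on $X$ — the soft truncation series of Remark \ref{4} — to run an induction: replace $X$ by the truncation $X_{\ge n}$ or peel off the top cohomology $\H^{s}(X)$, reducing to the module case where Kawasaki's theorem \cite[Theorem 1.1]{K} (the module version) gives honest finite generation of the cokernel/kernel pieces. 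The key point that makes finite generation come out is that $\H_{\phi(n)}^n(X)$ sits in an exact sequence relating it to $\H_Z^n$ of modules (the cohomologies of $X$), and for a finitely generated module $M$, $\H_Z^n(M)$ is finitely generated precisely when the Faltings numerical condition holds one degree up — which is what the vanishing hypothesis delivers.

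The main obstacle I anticipate is the translation step: showing that $\H_{\phi(i)}^{\le i}(X)=0$ for all $i<n$ forces the numerical depth inequality needed to invoke Theorem \ref{11}. The subtlety is that the hypothesis involves a family of different specialization-closed sets $\phi(i)$, one for each $i$, not a single $Z$; so I would fix $\q\notin\phi(n+1)$, choose the largest $i$ with $\q\in\phi(i)$ (so $i\le n$, and $\q\notin\phi(i+1)$), and use that $\H_{\phi(i)}^{j}(X)_\q=\H_{(\phi(i))_\q}^j(X_\q)=0$ for $j\le i$ by \cite[Corollary 3.5(1)]{DZ}, whence Lemma \ref{15}(2) gives $\depth X_\q\ge i+1$. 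Then for any $\p$ with $\phi(n)\ni\p\supseteq\q$, the weak Cousin condition applied along a saturated chain from $\q$ to $\p$ forces $\height\p/\q\ge n-i$ (each saturated step drops the filtration index by at most one, and $\q\in\phi(i)\setminus\phi(i+1)$ while $\p\in\phi(n)$), so $\height\p/\q+\depth X_\q\ge(n-i)+(i+1)=n+1$. That chain-counting argument, combined with carefully handling the $\pm\infty$ cases, is where the real work lies; once it is in place, Theorem \ref{11} and the reduction to the module case of \cite{K} finish the proof.
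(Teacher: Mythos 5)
Your overall plan — translate the vanishing hypothesis into a depth inequality via the weak Cousin condition, invoke Theorem~\ref{11} to get an annihilating ideal, then deduce finite generation — is the right one and matches the paper. However, there is a concrete error in the choice of $Y$, and it breaks the key inequality.

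You set $Y=\phi(n+1)$ and $Z=\phi(n)$ and try to verify $\height\p/\q+\depth X_\q\ge n+1$ for all $Z\ni\p\supseteq\q\notin Y$. The problem is the case $\q\in\phi(n)\setminus\phi(n+1)$: such a $\q$ satisfies $\q\notin Y$, and we may take $\p=\q\in\phi(n)=Z$, so $\height\p/\q=0$. The hypothesis $\H_{\phi(i)}^{\le i}(X)=0$ for $i<n$ then gives, via Lemma~\ref{15}(2) applied with $W=\phi(n-1)\ni\q$, only $\depth X_\q\ge n$, never $\ge n+1$; there is no vanishing hypothesis at level $n$ (indeed $\H_{\phi(n)}^n(X)$ is exactly the thing whose finiteness we are trying to prove, so one cannot assume anything about it). Your own chain-counting computes ``the largest $i$ with $\q\in\phi(i)$,'' which in this case is $i=n$, and at that point you would need $\H_{\phi(n)}^{\le n}(X)=0$, which is not among the hypotheses. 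The fix is to take $Y=Z=\phi(n)$. Then $\q\notin Y$ forces $\q\notin\phi(n)$, so $\p\supsetneq\q$ and $u:=\height\p/\q\ge 1$; walking a saturated chain from $\p\in\phi(n)$ down to $\q$ gives $\q\in\phi(n-u)$ with $n-u<n$, the hypothesis applies, and one obtains $\depth X_\q\ge n-u+1$, hence $\height\p/\q+\depth X_\q\ge n+1$ as required. Theorem~\ref{11} (with integer $n+1$) then produces an ideal $I$ with $\V(I)\subseteq\phi(n)$ and $I\,\H_{\phi(n)}^{\le n}(X)=0$.

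For the final step you gesture at a truncation-and-induction argument, but this is not needed and, as sketched, does not obviously terminate. Once one has an ideal $I$ with $\V(I)\subseteq\phi(n)$ annihilating $\H_{\phi(n)}^{\le n}(X)$, the finiteness of $\H_{\phi(n)}^i(X)$ for $i\le n$ follows directly from \cite[Corollary~3.3]{DZ}; that is the citation the paper uses to close the argument. Also note that your side remark ``$\supp\H_{\phi(n)}^n(X)\subseteq\phi(n+1)$'' would not be available in the corrected version (one only gets support in $\phi(n)$), and it is not needed.
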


\begin{proof}
We claim that $\height\p/\q+\depth X_\q>n$ for all $\p,\q\in\spec R$ with $\phi(n)\ni\p\supseteq\q\notin\phi(n)$.
Indeed, there is a chain $\q=\p_0\subsetneq\cdots\subsetneq\p_u=\p$ in $\spec R$, where $u=\height\p/\q$.
The inclusion $\p_{i-1}\subsetneq\p_i$ is saturated for each $1\le i\le u$, and $\p_u=\p\in\phi(n)$.
As $\phi$ satisfies the weak Cousin condition, we inductively get $\q=\p_0\in\phi(n-u)$.
If $u=0$, then $\phi(n)\ni\p=\q\notin\phi(n)$, a contradiction.
Hence $u>0$, and $n-u<n$.
By assumption, we have $\H_{\phi(n-u)}^{\le(n-u)}(X)=0$.
Lemma \ref{15}(2) shows $\depth X_\q>n-u$.
Thus, $\height\p/\q+\depth X_\q>u+(n-u)=n$.

It follows from the above claim and Theorem \ref{11} that there exists an ideal $I$ of $R$ such that $\V(I)\subseteq\phi(n)$ and $I\H_{\phi(n)}^{\le n}(X)=0$.
By \cite[Corollary 3.3]{DZ}, the $R$-module $\H_{\phi(n)}^i(X)$ is finitely generated for every $i\le n$.
\end{proof}

To state our theorem, we recall the definitions of certain maps introduced in \cite{AJS}.

\begin{dfn}[{\cite[paragraph just before Theorem 3.11]{AJS}}]
Let $\phi$ be a sp-filtration of $\spec R$, and $\X$ an aisle in $\db(R)$.
We define the full subcategory $\A(\phi)$ of $\db(R$) and the sp-filtration $\Psi(\X)$ of $\spec R$ by
\begin{align*}
\A(\phi)&=\{X\in\db(R)\mid\supp\H^i(X)\subseteq\phi(i)\text{ for all }i\in\ZZ\},\\
\Psi(\X)(i)&=\{\p\in\spec R\mid(R/\p)[-i]\in\X\}\text{ for each }i\in\ZZ.
\end{align*}
\end{dfn}

Now we can achieve the purpose of this section; the main result of the section is the theorem below.

\begin{thm}\label{12}
If $R$ is CM-excellent and has finite Krull dimension, then one has one-to-one correspondences
$$
\xymatrix{
{\left\{\,
\begin{matrix}
\text{aisles}\\
\text{in $\db(R)$}
\end{matrix}
\,\right\}}
\ar@<.7mm>[r]^-\Psi
&
{\left\{\,
\begin{matrix}
\text{sp-filtrations of $\spec R$}\\
\text{satisfying the weak Cousin condition}
\end{matrix}
\,\right\}}
\ar@<.7mm>[l]^-\A
\ar@<.7mm>[r]^-\F
&
{\left\{\,
\begin{matrix}
\text{$t$-functions}\\
\text{on $\spec R$}
\end{matrix}
\,\right\}}.
\ar@<.7mm>[l]^-\Phi
}
$$
\end{thm}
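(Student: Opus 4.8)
The second of the two displayed bijections, between sp-filtrations satisfying the weak Cousin condition and $t$-functions on $\spec R$, is exactly Proposition \ref{18}. So the task is to produce the first bijection, between aisles in $\db(R)$ and sp-filtrations satisfying the weak Cousin condition, realized by $\Psi$ and $\A$. The plan is to follow the strategy of \cite{AJS}, the point being that every appeal there to local duality (hence to a dualizing complex) can be replaced by an appeal to Lemma \ref{19}, which needs only CM-excellence because it is deduced from Theorem \ref{11}. Two facts cost nothing: for any sp-filtration $\phi$ the subcategory $\A(\phi)$ is closed under the shift $[1]$ (as $\phi(i)\supseteq\phi(i+1)$) and under extensions (by the long exact cohomology sequence); and $\Psi\A(\phi)=\phi$ always, since $(R/\p)[-i]\in\A(\phi)$ precisely when $\V(\p)\subseteq\phi(i)$, i.e.\ when $\p\in\phi(i)$, $\phi(i)$ being specialization-closed.

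The heart of the matter is that $\A(\phi)$ is an aisle whenever $\phi$ satisfies the weak Cousin condition. By the criterion of Keller--Vossieck \cite{KV} and the previous paragraph, it is enough to produce for each $X\in\db(R)$ an exact triangle $X'\to X\to X''\rightsquigarrow$ in $\db(R)$ with $X'\in\A(\phi)$ and $\Hom_{\db(R)}(\A(\phi),X'')=0$. I would construct $X'$ by an inductive procedure in the spirit of \cite{AJS}: one successively applies the local cohomology functors $\rg_{\phi(i)}$ together with soft truncations so as to push the cohomology of $X$, degree by degree, into the subsets $\phi(i)$, with the cone $X''$ arranged to satisfy the dual vanishing $\H_{\phi(i)}^{\le i}(X'')=0$ for all $i$, which is what $\Hom_{\db(R)}(\A(\phi),X'')=0$ amounts to (here one uses that $\A(\phi)$ is the smallest extension-closed, shift-closed subcategory containing the objects $(R/\p)[-i]$, $\p\in\phi(i)$, via prime filtrations and the truncation triangles of Remark \ref{4}). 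Only finitely many degrees are involved, because $X$ is bounded and $\dim R<\infty$. The sole obstruction to remaining inside $\db(R)$ is that some $\H_{\phi(i)}^i$ might not be finitely generated---and Lemma \ref{19} is precisely the statement that this does not happen under the weak Cousin hypothesis; this is the unique place where CM-excellence, through Theorem \ref{11}, is used.

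For the converse, let $\X$ be an aisle. I would first verify that $\Psi(\X)$ is an sp-filtration satisfying the weak Cousin condition: it is decreasing because $\X[1]\subseteq\X$; each $\Psi(\X)(i)$ is specialization-closed, proved by noetherian induction on supports using, for $a\in\q\setminus\p$, the exact triangle coming from $0\to R/\p\xrightarrow{a}R/\p\to R/(\p+aR)\to0$ together with extension-closedness of $\X$; and the weak Cousin condition falls out of the height-one instance of the same devissage. Then $\A\Psi(\X)=\X$. The inclusion $\A\Psi(\X)\subseteq\X$ is pure devissage: if $\supp\H^i(X)\subseteq\Psi(\X)(i)$ for all $i$, a prime filtration of $\H^i(X)$ gives $\H^i(X)[-i]\in\X$, and the truncation triangles of Remark \ref{4} with extension-closedness then give $X\in\X$. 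The reverse inclusion, that $X\in\X$ forces $(R/\p)[-i]\in\X$ whenever $\p\in\supp\H^i(X)$, I would obtain from the orthogonality $\X={}^\perp(\X^\perp)$ together with Theorem \ref{11}, which governs the pertinent annihilators and depths. With both composites of $\Psi$ and $\A$ the identity, and Proposition \ref{18} on the other side, the theorem follows.

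The step I expect to be the real obstacle is the construction of the right adjoint of the inclusion $\A(\phi)\hookrightarrow\db(R)$, and inside it the finiteness control that keeps the iterated local-cohomology truncations in $\db(R)$; that control is exactly the content of Lemma \ref{19}, hence of Theorem \ref{11}, so Faltings' annihilator theorem for complexes is the linchpin. The other verifications---the behaviour of $\Psi$ on aisles, and the two inclusions making up $\A\Psi(\X)=\X$---are prime-avoidance and devissage arguments of the kind already carried out at length in Section 3.
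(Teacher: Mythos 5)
Your proposal takes essentially the same route as the paper: the second bijection is delegated to Proposition \ref{18}, and the first is obtained by running the argument of \cite[Theorem 6.9]{AJS} with Lemma \ref{19} (itself a consequence of Theorem \ref{11}) substituted for the local-duality step, which is exactly how the paper proceeds. The paper's proof is terser---it simply cites the proof of \cite[Theorem 6.9]{AJS} and \cite[Lemma 5.7]{AJS} and notes that the argument survives once Lemma \ref{19} replaces local duality and finite Krull dimension replaces the dualizing complex---but the key insight you identify (finiteness of $\H_{\phi(n)}^n(X)$ via Lemma \ref{19} as the linchpin, with the devissage/prime-filtration steps costing nothing extra) matches the paper's.
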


\begin{proof}
Proposition \ref{18} yields the mutually inverse bijections $(\F,\Phi)$.
The mutually inverse bijections $(\Psi,\A)$ are obtained by Lemma \ref{19} and the proof of \cite[Theorem 6.9]{AJS}.
In fact, the proof of \cite[Theorem 6.9]{AJS} shows the assertion of Lemma \ref{19} under the stronger assumption that $R$ admits a dualizing complex (to invoke the local duality theorem), uses induction on the length of $\phi$ (induction is possible because the existence of a dualizing complex implies the finiteness of the Krull dimension of $R$ by \cite[Chapter V, Corollary 7.2]{H} or \cite[Corollary 1.4]{K0}), and then applies \cite[Lemma 5.7]{AJS}.
This argument remains valid as long as $R$ is a CM-excellent ring of finite Krull dimension.
\end{proof}

Finally, we make a remark to say about the relationship of the above theorem with a result in \cite{AJS}.

\begin{rem}
The 1-1 correspondence of (1) and (2) in \cite[Corollary 6.11]{AJS} for $\db(R)$ is the same as the one $(\Psi,\A)$ in Theorem \ref{12} under the stronger assumption that $R$ admits a dualizing complex; see Remark \ref{22}(4).
\end{rem}

\begin{ac}
The author thanks Hiroki Matsui for valuable comments on Remark \ref{22}(1) and useful discussions about Theorems \ref{11} and \ref{12}.
The author also thanks Leo Alonso Tarr\'{i}o and Ana Jerem\'{i}as L\'opez for helpful comments on the proof of \cite[Theorem 6.9]{AJS}.
\end{ac}

\end{document}